\documentclass[12pt,reqno]{amsart}

\usepackage{graphicx,indentfirst}
\usepackage{amsmath,amssymb,mathrsfs}
\usepackage{amsthm,amscd}
\usepackage{verbatim}
\usepackage{appendix}
\usepackage{enumitem,titletoc}
\usepackage{appendix}
\usepackage{imakeidx}
\makeindex[columns=2, title=Alphabetical Index]
\usepackage{fancyhdr}
\usepackage{amsfonts,color}
\usepackage[all]{xy}
\usepackage{tikz-cd}
\usepackage{syntonly}
\usepackage{fancyhdr}
\linespread{1.06}
\newtheorem{theorem}{Theorem}[section]
\newtheorem{lemma}{Lemma}[section]
\newtheorem{remark}{Remark}[section]
\usepackage[left=2.55cm,right=2.55cm,bottom=2.8cm,top=2.8cm]{geometry}

\newcommand{\abs}[1]{|#1|^2}

\usepackage{tikz}
\usepackage{extarrows}


%

\def\XXint#1#2#3{{\setbox0=\hbox{$#1{#2#3}{\int}$ }
\vcenter{\hbox{$#2#3$ }}\kern-.6\wd0}}

\newtheorem{corr}{Corollary}[section]

\allowdisplaybreaks

\newcommand{\ddbar}{i\partial\bar\partial}
\newcommand{\innpro}[1]{\langle#1\rangle}
\newcommand{\bk}[1]{\Big(#1\Big)}

\newcommand{\ho}{ (\frac{\partial}{\partial t}- \Delta)  }
\makeindex

\newcommand{\xa}{\xrightarrow}

\DeclareMathOperator{\ric}{Ric}

\DeclareMathOperator{\tr}{tr}

\numberwithin{equation}{section}

\begin{document}

\address{Department of Mathematics, Columbia University, New York, NY 10027}

\email{bguo@math.columbia.edu}


\title{K\"ahler-Ricci flow on blowups along  submanifolds}
\author{ Bin Guo}

\thanks{}
\maketitle
\begin{abstract}
In this short note, we study the behavior of K\"aher-Ricci flow on K\"ahler manifolds which contract divisors to smooth submanifolds. We show that the K\"ahler potentials are H\"older continuous and the flow converges sequentially in Gromov-Hausdorff topology to a compact metric space which is homeomorphic to the base manifold. 

\end{abstract}

\section{Introduction}
The Ricci flow, introduced by Hamilton (\cite{H}) in 1982, has been a powerful tool in solving problems in geometry and analysis. It deforms any metric with positive Ricci curvature in real $3$-dimensional manifold to a metric with constant curvature (\cite{H}). By performing surgery through singular times, Perelman (\cite{P}) used Ricci flow to solve the geometrization conjecture for $3$-dimensional manifolds. On the complex aspect, the Ricci flow preserves the K\"ahler condition (\cite{C}) and is reduced to a scalar equation with Monge-Amp\`ere type, which after suitable normalization converges to a solution of the Calabi conjecture (\cite{Y, C}). The non-K\"ahler analogue of Ricci flow also generates much interest recently, among them are the Chern-Ricci flow (\cite{TW1}),  the Anomaly flow (\cite{PPZ1}) and etc, and we refer to \cite{PPZ2} for a survey on the recent development of non-K\"ahler geometric flows.

The analytic minimal model program, laid out in \cite{ST2}, predicts how the K\"ahler-Ricci flow behaves on a projective variety. It is conjectured that the K\"ahler-Ricci flow will either collapse in finite time, or deform any projective variety to its minimal model after finitely many divisorial contractions or flips in the Gromov-Hausdorff topology. There are various results on the finite time collapsing of K\"ahler-Ricci flow, see for example \cite{S1,SSW, PSSW, TWY, ToZh} and references therein. The behavior of K\"ahler-Ricci flow on some small contractions is studied in \cite{S13, SY} and it is shown that the flow forms a continuous path in Gromov-Hausdorff topology. In \cite{SW1,SW2}, Song and Weinkove study the divisorial contractions when the divisor is contracted to discrete  points, and it is shown that the flow converges in Gromov-Hausdorff topology to a metric space which is isometric to the metric completion of the base manifold with the smooth limit of the flow outside the divisor, and the flow can be continued on the new space.  The main purpose of this note is to generalize their results to divisorial contractions when the divisor is contracted to a higher dimensional subvariety.

\smallskip

Let $Y$ be a K\"ahler manifold and $N\subset Y$ be a complex submanifold of codimension $k\ge 1$. Let $X$ be the K\"ahler manifold obtained by blowing up $Y$ along $N$, $\pi: X\to Y$ be the blown-down map and $E= \pi^{-1}(N)$ be the exceptional divisor in $X$. We consider the  (unnormalized) K\"ahler-Ricci flow on $X$:
\begin{equation}\label{eqn:KRF}
\left\{\begin{aligned}
&\frac{\partial \omega}{\partial t} = - \ric(\omega),\\
&\omega(0) = \omega_0,
\end{aligned}\right.
\end{equation}
for a suitable fixed K\"ahler metric $\omega_0$ on $X$. We  assume the limit cohomology class satisfies $[\omega_0] + T K_X = [\pi^* \omega_Y]$ for some K\"ahler metric $\omega_Y$ on $Y$, where the maximal existence time (see \cite{TZ}) of the flow  \eqref{eqn:KRF} is given by $$T = \sup\{t>0: ~ [\omega_0] + t K_X \text{ is K\"ahler}\}<\infty.$$
 We define the reference metrics along the flow $$\hat \omega_t = \frac{T-t}{T} \omega_0 + \frac{t}{T}\pi^*\omega_Y.$$ In the following for notation simplicity we shall denote $\hat  \omega_Y = \pi^*\omega_Y$, which is a nonnegative $(1,1)$-form on $X$.

It is well-known that the flow  \eqref{eqn:KRF} is equivalent to the following parabolic complex Monge-Amp\`ere equation
\begin{equation}\label{eqn:MA}
\left\{\begin{aligned}
&\frac{\partial\varphi}{\partial t} = \log \frac{(\hat \omega_t + \ddbar \varphi)^n}{\Omega},\\
&\varphi( 0 ) = 0,
\end{aligned}\right.
\end{equation}
where $\omega = \hat \omega_t + \ddbar \varphi$ satisfies \eqref{eqn:KRF} and $\Omega$ is a smooth volume form satisfying $\ddbar \log \Omega = \frac{1}{T}( \hat\omega_Y - \omega_0  )$. 

Our main theorem is on the behavior of the metrics $\omega(t)$ as $t\to T^-$. 
\begin{theorem}\label{thm:main}
Let $\pi: X\to Y$ and $\omega_t = \omega_0 + \ddbar \varphi_t$ be  as above, then the following hold: there exists a uniform constant $C=C(n,\omega_0,\omega_Y, \pi)>0$
\begin{enumerate}[label=(\arabic*)]
\item $\varphi_t$ is uniformly H\"older continuous in $(X,\omega_0)$, i.e. $|\varphi_t(p) - \varphi_t(q)|\le C d_{\omega_0}(p,q)^\delta$, for any $p,q\in X$ and some $\delta\in (0,1)$, and $\varphi_t\xrightarrow{C^\delta(X,\omega_0)} \varphi_T\in PSH(X,\pi^*\omega_Y)\cap C^\delta(X,\omega_0)$. Moreover, $\varphi_T$ descends to a function $\bar \varphi_T\in PSH(Y,\omega_Y)\cap C^{\delta_0}(Y,\omega_Y)$ for some $\delta_0\in (0,1)$.

\smallskip

\item \label{item 2} $\omega_t$ converge weakly to $\omega_T: = \pi^*\omega_Y + \ddbar \varphi_T$ as $(1,1)$-currents on $X$ and the convergence is smooth and uniform on any compact subset $K\Subset X\backslash E$.

\smallskip

\item diam$(X,\omega_t)\le C$ for any $t\in [0,T)$.

\smallskip

\item \label{item 3}for any sequence $t_i\to T^-$, there exists a subsequence $\{t_{i_j}\}$ such that $(X,\omega_{t_{i_j}})$ (as compact metric spaces) converge in Gromov-Hausdorff topology to a compact metric space $(Z,d_Z)$.

\smallskip

\item the metric completion of $(Y\backslash N, \omega_T)$ is isometric to $(Y,d_T)$, where the distance function $d_T$ is induced from $\omega_T$ and defined in \eqref{eqn:dT}. And there exists an open dense subset $Z^\circ \subset Z$ such that $(Y\backslash N, d_T)$ and $(Z^\circ, d_Z)$ are homeomorphic and locally isometric. Furthermore $(Z,d_Z)$ is homeomorphic to $(Y,d_T)$.

\end{enumerate}

\end{theorem}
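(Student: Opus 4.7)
The plan is to adapt the Song-Weinkove framework \cite{SW1,SW2} (where $N$ is a single point) to the present setting where the exceptional divisor contracts to a positive-dimensional complex submanifold. The proof splits naturally into two phases: first, uniform analytic estimates for the potentials $\varphi_t$ along the flow, which give items (1)--(3); second, a geometric comparison through $\pi$ that identifies the Gromov-Hausdorff limit in items (4)--(5).

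For item (1), I would first establish a uniform $L^\infty$ bound $\|\varphi_t\|_\infty \le C$. The upper bound follows from the maximum principle together with $\hat\omega_t \le C\omega_0$; the lower bound follows by comparing with the pull-back $\pi^*\bar\psi$ of the bounded solution of the limiting degenerate Monge-Amp\`ere equation on $(Y,\omega_Y)$, whose existence is guaranteed by Kolodziej / Eyssidieux-Guedj-Zeriahi since $\omega_Y$ is K\"ahler. Next, applying the maximum principle to $\dot\varphi_t$ and to the auxiliary quantity $(T-t)\dot\varphi_t + \varphi_t$ yields an upper bound $(\hat\omega_t + \ddbar\varphi_t)^n \le C\Omega$. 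Feeding this into the Demailly-Dinew-Guedj-Kolodziej-Zeriahi H\"older stability theorem produces the uniform estimate $\|\varphi_t\|_{C^\delta(X,\omega_0)}\le C$ for some $\delta\in(0,1)$, and Arzel\`a-Ascoli then provides the limit $\varphi_T\in \psh(X,\pi^*\omega_Y)\cap C^\delta(X,\omega_0)$. Since $\pi^*\omega_Y$ is degenerate precisely along the fibers of $\pi|_E$, any $\pi^*\omega_Y$-psh function must be constant along them, so $\varphi_T$ descends to $\bar\varphi_T$ on $Y$; its H\"older regularity $\delta_0$ on $(Y,\omega_Y)$ is inherited from that of $\pi^{-1}$ away from $N$, together with a direct modulus-of-continuity estimate across $N$ using $\bar\varphi_T\in L^\infty$. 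Item (2) is a consequence: smooth convergence on $K\Subset X\setminus E$ is the standard parabolic bootstrap (the reference metric $\hat\omega_t$ is uniformly K\"ahler on $K$ for $t$ close to $T$), while the $C^\delta$ convergence implies weak convergence of $\omega_t$ as currents on $X$.

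For item (3), I would invoke the scalar curvature upper bound along K\"ahler-Ricci flow (a direct consequence of the equation together with the bound on $\dot\varphi_t$) and combine it with recent diameter estimates for degenerate complex Monge-Amp\`ere equations with bounded potentials, or alternatively with Perelman-Sesum type arguments; both routes yield $\mathrm{diam}(X,\omega_t)\le C$ depending only on the cohomological data and on $\|\varphi_t\|_\infty$.

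Items (4) and (5) are the most substantial. I would first define $d_T$ on $Y$ as the length metric induced by $\omega_T$ on $Y\setminus N$ and extend it across $N$ by taking infima of limits along approximating paths; $d_T$ is a genuine distance because $\bar\varphi_T$ is bounded and continuous, and $(Y,d_T)$ is compact. The central claim is that $\pi:(X,\omega_t)\to (Y,d_T)$ is an $\varepsilon_t$-Gromov-Hausdorff approximation with $\varepsilon_t\to 0$. Surjectivity is trivial, and the near-isometry property on compact subsets of $X\setminus E$ follows directly from item (2). What remains, and is the main obstacle, is to show that $\mathrm{diam}_{\omega_t}\pi^{-1}(p)\to 0$ uniformly in $p\in N$. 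I would prove this by working in standard blow-up coordinates around a point of $N$: a typical fiber $\pi^{-1}(p)\cong\mathbb{CP}^{k-1}$ carries $\hat\omega_t|_{\pi^{-1}(p)}$ of order $O(T-t)$ times a fixed Fubini-Study-type metric, hence of intrinsic diameter $O(\sqrt{T-t})$. Combining this with the Monge-Amp\`ere upper bound via a Schwarz-lemma / parabolic maximum-principle argument in the spirit of Song-Tian yields $\omega_t|_{\pi^{-1}(p)}\le C\hat\omega_t|_{\pi^{-1}(p)}$ and hence the desired fiber collapse. Granted this, $\pi$ is an $\varepsilon_t$-GH approximation and $(X,\omega_t)\to(Y,d_T)$ in the GH topology; the limit $(Z,d_Z)$ in (4) is therefore unique (no subsequence is in fact needed), homeomorphic to $(Y,d_T)$ via the map induced by $\pi$, with $Z^\circ$ corresponding to the image of $X\setminus E$.
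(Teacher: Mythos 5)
Your overall architecture (analytic estimates $\Rightarrow$ H\"older potentials $\Rightarrow$ GH geometry through $\pi$) matches the paper, but several of the key steps you outline either do not work as stated or prove less than you claim, while in one place you claim substantially more than the theorem asserts.

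\textbf{Descent of $\varphi_T$ to $Y$.} You write that the H\"older regularity of $\bar\varphi_T$ on $(Y,\omega_Y)$ ``is inherited from that of $\pi^{-1}$ away from $N$, together with a direct modulus-of-continuity estimate across $N$ using $\bar\varphi_T\in L^\infty$.'' This is a real gap: boundedness plus H\"older continuity on $Y\setminus N$ does not give H\"older continuity across $N$, and the blow-down map $\pi^{-1}$ is not Lipschitz near $N$. The paper's Lemma~\ref{lemma 1.8} handles this by an explicit case analysis in blow-up coordinates, comparing line segments in $Y$ with their lifts to $X$ and paying a loss in the exponent ($\delta_Y=\min\{\delta(1-\delta),\delta^2\}$). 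You need something of that form; there is no shortcut from $L^\infty$.

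\textbf{Items (4)--(5).} You claim that $\pi:(X,\omega_t)\to(Y,d_T)$ is an $\varepsilon_t$-GH approximation with $\varepsilon_t\to0$, which would give convergence of the \emph{full} family to $(Y,d_T)$ and make $\pi_Z$ an isometry. This is strictly stronger than the theorem, and the paper explicitly points out (in the discussion of the continuity method) that it does \emph{not} obtain this for the K\"ahler--Ricci flow: the improvement to full convergence and isometry requires a uniform Ricci lower bound (available along the continuity path, not along the flow), which enables Gromov's almost-geodesic argument. Your argument for the uniform GH approximation is ``fiber collapse + smooth convergence on $X\setminus E$,'' but that is insufficient: fiber collapse controls diameters of $\pi^{-1}(p)$, not distances \emph{between} points near $E$ versus $d_T$ of their images, and smooth convergence on compact $K\Subset X\setminus E$ does not control $d_{\omega_t}$ restricted to $K$ because minimal geodesics may leave $K$ and run through a neighborhood of $E$. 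The paper's Lemma~\ref{lemma 1.13} addresses precisely these issues with geodesic-convex balls and case analysis on whether image curves stay in such a ball; the output is a \emph{homeomorphism} $\pi_Z:Z\to Y$, a local isometry only on the dense open set $Z^\circ$, and only subsequential convergence. You should not expect to upgrade this without an additional input like a Ricci lower bound.

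\textbf{Item (1) via pluripotential theory and item (3).} Your route to H\"older continuity via DDGKZ stability in the degenerating classes $[\hat\omega_t]$ is a genuinely different idea, but you need to justify uniformity of the constants as $[\hat\omega_t]\to[\pi^*\omega_Y]$ on the boundary of the K\"ahler cone; this is not automatic from the standard fixed-class statement. Even granting it, you would still want the key quantitative geometric input of the paper, namely the metric upper bound $\omega_t\le C\omega_0/\sigma_X^{1-\delta}$ (Lemma~\ref{lemma 1.5}). That single estimate drives the paper's proofs of the distance comparison $d_{\omega_t}(p,q)\le Cd_{\omega_0}(p,q)^\delta$, the diameter bound (item (3) follows trivially from it — no scalar-curvature or Perelman--Sesum input is used), the packing estimate for Gromov precompactness, and much of the GH analysis. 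Without an analogue of it, your proposal for items (3)--(5) has no quantitative handle on the degenerating metric near $E$.
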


The item \ref{item 2} is known to hold for K\"ahler-Ricci flow for more general holomorphic maps $\pi:X\to Y$ with dim$Y = \mathrm{dim} X$ (see e.g. \cite{PSS, SW1, TZ}). We include it in the theorem just for completeness. We remark that Theorem \ref{thm:main} also holds if the base $Y$ has some mild singularities, for example, if the analytic subvariety $N$ is locally of the form $\mathbb C^k\times ( \mathbb C^{n-k}/\mathbb Z_p  )$, where $\mathbb Z_p$ denotes the $S^1$-action $\{e^{2l\pi i/ p}\}_{l=1}^p$ on $\mathbb C^{n-k}$ by $$e^{2l\pi i/p}\cdot (z_{k+1},\ldots, z_n) \to (e^{2l\pi i/p} z_{k+1},\ldots, e^{2l\pi i/p} z_n).$$ The proof is by combining the techniques of \cite{SW2} and this note, so we omit the details.

\smallskip

Lastly we mention that under the same set-up as in Theorem \ref{thm:main}, the same and even stronger results hold for K\"ahler metrics along continuity method. More precisely, let $u_t\in PSH(X, \hat \omega_Y + t \omega_0)$ be the solution to the complex Monge-Amp\`ere equations
\begin{equation}\label{eqn:continuity}\omega_t^n = (\hat \omega_Y + t \omega_0 + \ddbar u_t  )^n = c_te^{F} \omega_0^n,\, \sup u_t = 0, \,  t\in (0,1],\end{equation}
where $F$ is a given smooth function on $X$ and $c_t$ is a normalizing constant so that the integral of both sides are the same. It has been shown in \cite{FGS} that $\mathrm{diam}(X, \omega_t)$ is bounded by a constant $C=C(n,\omega_0, \hat \omega_Y ,F)>0$ and  the Ricci curvature of $\omega_t$ is uniformly bounded below. We can repeat almost identically the proof of Theorem \ref{thm:main} to the equation \eqref{eqn:continuity} to get the same conclusions for $u_t$ as the $\varphi_t$ in Theorem \ref{thm:main}. Furthermore, along the continuity method \eqref{eqn:continuity}, we can improve the Gromov-Hausdorff convergence in Theorem \ref{thm:main} in the sense that the full sequence (without the need of passing to a subsequence) $(X,\omega_t)$ converges in GH topology to a compact metric space $(Z,d_Z)$ which is isometric to the metric completion of $(Y\backslash N, \hat\omega_0)$, where $\hat\omega_0$ is the smooth limit of $\omega_t$ on $X\backslash \pi^{-1}(N) = Y\backslash N$. The main advantage in this case is that  the Ricci curvature has uniform lower bound so we can apply the argument in \cite{DGSW}, in particular the Gromov's lemma to find an almost geodesic connecting any two points away from the singular set $\pi^{-1}(N)$.

\section{Preliminaries}

The following estimates are well-known (\cite{Y, PSS, TZ, SW1}),  so we just state the results and omit  the proofs.
\begin{lemma}\label{lemma 1.1}
There exists a constant $C>0$ depending only on $(X,\omega_0)$, $(Y,\omega_Y)$ such that 
\begin{enumerate}[label=(\roman*)]
\item  $\| \varphi\|_{L^\infty(X)}\le C$ for all $t\in [0,T)$,

\item $\dot\varphi: = \frac{\partial \varphi}{\partial t}\le C$ and this is equivalent to $\omega^n \le C \Omega$ from the equation \eqref{eqn:MA}.

\item as $t\to T^-$, $\varphi$ converge to a bounded $\hat \omega_Y$-PSH function $\varphi_T$ and $\omega$ converge weakly to $\omega_T: = \hat\omega_Y + \ddbar \varphi_T$ as $(1,1)$-currents on $X$.

\end{enumerate}
\end{lemma}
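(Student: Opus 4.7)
All three assertions are consequences of the parabolic maximum principle applied to \eqref{eqn:MA}, combined with a uniform Kolodziej-type $L^\infty$ estimate for the degenerate complex Monge-Ampère equation in the limit. The natural order is: upper bound on $\varphi$, upper bound on $\dot\varphi$, lower bound on $\varphi$, and finally the convergence.

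For the upper bound on $\varphi$, at a point $(x_t,t)$ realizing $\max_X\varphi(\cdot,t)$ one has $\ddbar\varphi(x_t,t)\le 0$, so
\[
\dot\varphi(x_t,t)=\log\frac{(\hat\omega_t+\ddbar\varphi)^n}{\Omega}(x_t,t)\le\log\frac{\hat\omega_t^n}{\Omega}(x_t,t)\le C
\]
uniformly in $t$, since $\hat\omega_t\le\omega_0+\hat\omega_Y$. Hamilton's trick yields $\max_X\varphi(\cdot,t)\le Ct\le CT$. For the upper bound on $\dot\varphi$, differentiate \eqref{eqn:MA} in $t$ to obtain $\ddot\varphi=\Delta_\omega\dot\varphi+\tr_\omega(\partial_t\hat\omega_t)$ with $\partial_t\hat\omega_t=(\hat\omega_Y-\hat\omega_t)/(T-t)$. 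Set $Q:=t\dot\varphi-\varphi-nt$; using the algebraic identity $\frac{T}{T-t}\hat\omega_t=\omega_0+\frac{t}{T-t}\hat\omega_Y$, a direct computation gives
\[
(\partial_t-\Delta_\omega)Q=-\tr_\omega\omega_0\le 0.
\]
The maximum principle then yields $Q\le\max_X Q(\cdot,0)=0$, hence $\dot\varphi\le(\varphi+nt)/t$, uniformly bounded on $[t_0,T)$ for any fixed $t_0>0$. Combined with smooth short-time existence on $[0,t_0]$ this gives $\dot\varphi\le C$ on $[0,T)$, equivalently $\omega^n\le C\Omega$.

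For the lower bound on $\varphi$, the reference class $[\hat\omega_t]=\tfrac{T-t}{T}[\omega_0]+\tfrac{t}{T}[\pi^*\omega_Y]$ is dominated above by $\omega_0+\hat\omega_Y$ and has total volume bounded below by $\int_Y\omega_Y^n>0$, since $\pi^*\omega_Y$ is big. Because $\omega^n=e^{\dot\varphi}\Omega\le C\Omega$ has density uniformly bounded in $L^p$ against a fixed Kähler volume form for every $p\ge 1$, the uniform Kolodziej-type $L^\infty$ estimate applied along the degenerating family $\{\hat\omega_t\}_{t\in[0,T)}$ yields $\|\varphi\|_{L^\infty(X)}\le C$ independent of $t$, completing (i) and (ii).

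Finally, for (iii), the bound $\dot\varphi\le C$ shows that $\psi(x,t):=\varphi(x,t)-Ct$ is pointwise monotone decreasing in $t$; being uniformly bounded in $L^\infty$ it converges pointwise everywhere on $X$ to a bounded limit, so $\varphi(\cdot,t)\to\varphi_T$ pointwise. Since each $\varphi(\cdot,t)\in\mathrm{PSH}(X,\hat\omega_t)$ and $\hat\omega_t\to\hat\omega_Y=\pi^*\omega_Y$ smoothly, standard pluripotential theory (passage to the upper semicontinuous envelope, continuity of Monge-Ampère operators along decreasing nets) gives $\varphi_T\in\mathrm{PSH}(X,\hat\omega_Y)$ and weak convergence $\omega_t\to\omega_T:=\hat\omega_Y+\ddbar\varphi_T$ as $(1,1)$-currents on $X$. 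The most subtle ingredient of the program is the uniformity of the Kolodziej estimate as $t\to T^-$, since the limiting class $[\pi^*\omega_Y]$ degenerates along the exceptional divisor $E$; this is precisely where the big/nef structure of $[\pi^*\omega_Y]$ is essential.
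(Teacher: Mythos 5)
Your proof of the upper bounds is correct: the max-point estimate gives $\sup_X\varphi\le CT$, and your auxiliary quantity $Q=t\dot\varphi-\varphi-nt$ indeed satisfies $(\partial_t-\Delta)Q=-\tr_\omega\omega_0\le 0$, which with $Q(\cdot,0)=0$ and short-time regularity yields $\dot\varphi\le C$. Part (iii), deducing $\varphi_T\in\mathrm{PSH}(X,\hat\omega_Y)$ and weak convergence of currents from the monotonicity of $\varphi-Ct$, is also fine.

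The gap is in the lower bound on $\varphi$. The Kolodziej-type uniform $L^\infty$ estimate (Eyssidieux--Guedj--Zeriahi, Demailly--Pali, Z.~Zhang) applies to the \emph{normalized} solution $u$ of $(\hat\omega_t+\ddbar u)^n=f\hat\omega_t^n$ with $\sup_X u=0$, and its conclusion is an oscillation bound: $\|\varphi-\sup_X\varphi\|_{L^\infty}\le C$. To promote this to $\|\varphi\|_{L^\infty}\le C$ you must still show $\sup_X\varphi\ge -C$, and nothing in your argument supplies this. In fact $\dot\varphi$ is \emph{not} bounded below uniformly (it drifts to $-\infty$ near $E$ as $t\to T^-$), so you cannot simply integrate $\dot\varphi\ge -C$ in time, and the min-point argument also degenerates because $\hat\omega_t^n\to\hat\omega_Y^n$, which vanishes on $E$.

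There is an elementary fix which renders the Kolodziej machinery unnecessary here. The quantity $\Phi=(T-t)\dot\varphi+\varphi$, which the paper introduces for the gradient estimate (Lemma~\ref{lemma 1.7}), satisfies $(\partial_t-\Delta)\Phi=\tr_\omega\hat\omega_Y-n\ge-n$, whence by the parabolic minimum principle $\Phi\ge\min_X\Phi(\cdot,0)-nt\ge -C$. Combined with $\dot\varphi\le C$ this gives
\[
\varphi \;=\; \Phi-(T-t)\dot\varphi \;\ge\; -C-(T-t)C \;\ge\; -C',
\]
which is the missing lower bound. Note that this derivation of $\inf_X\Phi\ge -C$ is pure maximum principle and does not use any conclusion of Lemma~\ref{lemma 1.1}, so there is no circularity. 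Your approach via uniform Kolodziej is the one used in some of the cited references and is morally sound, but as written the normalization step is omitted, and once you add the $\Phi$-argument to close it, the Kolodziej input becomes superfluous.
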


\begin{lemma}\label{lemma 1.2}
There exists a uniform constant $C>0$ such that 
\begin{enumerate}[label=(\roman*)]
\item \label{item lemma 1.2} $\hat \omega_Y \le C \omega$ for all $t\in [0,T)$,

\item for any compact subset $K \Subset X\backslash E$, there exists a constant $C_{j,K}>0$ such that $\| \varphi\|_{C^j(K,\omega_0)}\le C_{j,K}$. Therefore the convergence $\omega_t\to \omega_T$ and $\varphi\to \varphi_T$ is smooth on $X\backslash E$, so $\omega_T$ and $\varphi_T $ are both smooth on $X\backslash E$.

\end{enumerate}
\end{lemma}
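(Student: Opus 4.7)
The plan is to establish (i) via a parabolic Schwarz-lemma argument (after Song--Tian), and to deduce (ii) from (i) together with the bounds in Lemma~\ref{lemma 1.1} and standard interior parabolic regularity theory.

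For (i), I would consider on $X\times [0,T)$ the auxiliary quantity
$$Q := \log\tr_\omega\hat\omega_Y - A\varphi,$$
for a large constant $A$ to be determined. Because $\pi$ is holomorphic and $d\pi$ has rank at least $n-k+1\ge 1$ everywhere on $X$, the semi-positive form $\hat\omega_Y=\pi^*\omega_Y$ satisfies $\tr_\omega\hat\omega_Y>0$ pointwise, so $Q$ is a smooth function. The standard Aubin--Yau/Schwarz computation along the K\"ahler--Ricci flow gives
$$\ho\log\tr_\omega\hat\omega_Y \;\le\; C_0\,\tr_\omega\hat\omega_Y,$$
with $C_0$ depending only on an upper bound for the bisectional curvature of $\omega_Y$. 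Combined with $\Delta\varphi=n-\tr_\omega\hat\omega_t$, the $L^\infty$ bound on $\dot\varphi$ from Lemma~\ref{lemma 1.1}, and the inequality $\tr_\omega\hat\omega_t\ge(t/T)\tr_\omega\hat\omega_Y$, one obtains
$$\ho Q \;\le\; \bigl(C_0 - At/T\bigr)\tr_\omega\hat\omega_Y + C_1 A$$
for some $C_1$ independent of $A$. For $t\in[T/2,T)$, choosing $A=2(C_0+1)$ makes the coefficient of $\tr_\omega\hat\omega_Y$ be $\le -1$, so the parabolic maximum principle applied to $Q$ on $X\times[T/2,T)$ bounds $\tr_\omega\hat\omega_Y$ at the spacetime maximum; the $L^\infty$ bound on $\varphi$ then propagates this into a uniform bound for $\tr_\omega\hat\omega_Y$ on all of $X\times[T/2,T)$. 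On the remaining interval $[0,T/2]$ the flow is smooth up to a strictly interior time, so the bound holds by short-time estimates. Together these yield $\hat\omega_Y\le C\omega$ on $X\times[0,T)$, which is (i).

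For (ii), fix $K\Subset X\setminus E$. On a neighborhood of $K$ in $X\setminus E$, $\hat\omega_Y$ is a smooth K\"ahler metric uniformly equivalent to $\omega_0$, so (i) gives a uniform lower bound $\omega\ge c_K\,\omega_0$ on $K$. Combining this with the determinant bound $\omega^n\le C\,\Omega$ from Lemma~\ref{lemma 1.1} (the eigenvalues of $\omega$ relative to $\omega_0$ are bounded below by $c_K$, and their product is bounded above) gives the matching upper bound $\omega\le C_K\,\omega_0$ on $K$. Thus equation~\eqref{eqn:MA} is uniformly parabolic on $K$, so the Evans--Krylov theorem yields uniform interior $C^{2,\alpha}$ estimates for $\varphi$, and Schauder bootstrap upgrades these to $\|\varphi\|_{C^j(K',\omega_0)}\le C_{j,K'}$ on any $K'\Subset K$ and for every $j$. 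Smooth convergence $\omega_t\to\omega_T$, $\varphi_t\to\varphi_T$ on $X\setminus E$, and smoothness of the limits, follow by Arzel\`a--Ascoli.

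The principal technical point lies in (i): because $\hat\omega_Y$ degenerates along $E$, the reference metric $\hat\omega_t$ does not uniformly dominate $\hat\omega_Y$ for small $t$, so the Schwarz computation cannot be closed in a single maximum-principle step; the two-interval splitting above handles this, and alternatively one could apply the Schwarz lemma to $\omega_0+\hat\omega_Y$ in place of $\hat\omega_Y$ at the cost of carrying an extra term. Once (i) is established, the passage to (ii) via the volume-form bound and interior Schauder theory is entirely standard.
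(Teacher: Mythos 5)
The paper does not give a proof of Lemma~\ref{lemma 1.2}; it refers to \cite{Y, PSS, TZ, SW1}. Your reconstruction is in the right spirit (parabolic Schwarz lemma via Chern--Lu and the maximum principle, then Evans--Krylov and bootstrap for interior higher-order estimates), and part~(ii) is fine once part~(i) is in hand. However, your argument for part~(i) has a genuine gap.

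You apply the maximum principle to $Q=\log\tr_\omega\hat\omega_Y-A\varphi$ and, after writing $\ho\varphi=\dot\varphi-\Delta\varphi=\dot\varphi-n+\tr_\omega\hat\omega_t$, you claim to control the resulting $-A\dot\varphi$ term using ``the $L^\infty$ bound on $\dot\varphi$ from Lemma~\ref{lemma 1.1}''. But Lemma~\ref{lemma 1.1} gives only the upper bound $\dot\varphi\le C$ (equivalently $\omega^n\le C\Omega$); it provides no lower bound, so $-A\dot\varphi$ is not bounded above by any constant. In fact a uniform lower bound on $\dot\varphi=\log(\omega^n/\Omega)$ should not be expected here: from the local expression \eqref{eqn:pullback}, $\hat\omega_Y$ has rank $n-k+1<n$ along $E$, and since $\varphi_T$ is constant on the $\mathbb{CP}^{k-1}$-fibers of $\hat\pi$ the limit $\omega_T=\hat\omega_Y+\ddbar\varphi_T$ also degenerates in the fiber directions, so $\omega^n/\Omega$ is expected to tend to zero near $E$ as $t\to T^-$. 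Your ``two-interval splitting'' in $t$ does not remedy this, because the troublesome $-A\dot\varphi$ term is present on both intervals; nor does replacing $\hat\omega_Y$ by $\omega_0+\hat\omega_Y$, which only changes the trace term.

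The standard fix is to replace $\varphi$ in the test quantity by the Tsuji-type function $\Phi=(T-t)\dot\varphi+\varphi$, which the paper later shows (in the proof of Lemma~\ref{lemma 1.7}) satisfies $\ho\Phi=\tr_\omega\hat\omega_Y-n$, with no $\dot\varphi$ term. Taking $Q=\log\tr_\omega\hat\omega_Y-A\Phi$ and using the Chern--Lu inequality $\ho\log\tr_\omega\hat\omega_Y\le C_0\tr_\omega\hat\omega_Y$, one gets $\ho Q\le (C_0-A)\tr_\omega\hat\omega_Y+An$, so choosing $A>C_0+1$ the maximum principle immediately gives $\tr_\omega\hat\omega_Y\le C$ once one observes $\Phi$ is uniformly bounded: $\Phi\le C$ follows from Lemma~\ref{lemma 1.1}, and $\Phi\ge-C$ follows from $\ho\Phi\ge -n$ and the maximum principle, without any use of part~(i). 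With that substitution your outline closes, and part~(ii) then goes through exactly as you wrote it.
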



In the proof of Lemma \ref{lemma 1.2}, we need the following Chern-Lu inequality as in the proof the Schwarz lemma (\cite{ST0})
\begin{equation*}
(\frac{\partial}{\partial t} - \Delta) \log \tr_\omega \hat \omega_Y \le C \tr_\omega \hat \omega_Y,
\end{equation*}
where $C>0$ depends also on the upper bound of the bisectional curvature of $(Y,\omega_Y)$. In turn this implies the equation below which will be used later.
\begin{equation}\label{eqn:my 1}
(\frac{\partial}{\partial t} - \Delta) \tr_\omega \hat \omega_Y \le - \frac{|\nabla \tr_\omega \hat \omega_Y|^2}{\tr_\omega\hat \omega_Y} + C (\tr_\omega \hat \omega_Y)^2 \le - c_0 |\nabla \tr_\omega\hat \omega_Y|^2 + C,
\end{equation}
where $c_0 = C^{-1}>0$ is the receptacle of the constant $C$ in \ref{item lemma 1.2} Lemma \ref{lemma 1.2}.

\subsection{K\"ahler metrics from the blown up} We will construct a smooth function $\sigma_Y$ on $Y$ such that $\sigma_Y = 0$ precisely on $N$. Choose a finite open cover $\{V_\alpha\}_{\alpha= 1}^J$ of $N$ in $Y$ and complex coordinates $\{w_{\alpha,i}\}_{i=1}^n$ on $V_\alpha$ such that $N\cap V_\alpha = \{w_{\alpha, 1} = \cdots = w_{\alpha, k} = 0\}$. We also denote $V_0 = Y\backslash \cup_\alpha \frac{1}{2}\overline{V_\alpha}$ and we may also assume that $V_0\cap N = \emptyset$. Take a partition of unity $\{\theta_\alpha\}_{\alpha = 0}^J$ subordinate to the open cover $\{V_\alpha\}_{\alpha = 0}^J$, and we define a smooth function
$$\sigma_Y = \theta_0\cdot 1 \;+\; \sum_{\alpha = 1}^J \theta_\alpha \cdot \sum_{j=1}^k |w_{\alpha, j}|^2\in C^\infty (Y),$$
 and it is straightforward to see from the construction that $\sigma_Y$ vanishes precisely along $N$. Since $\{w_{\alpha,i}\}_{i=1}^k$ are defining functions of $N$, it follows that if $V_\alpha \cap V_\beta\neq \emptyset$, then the function $$f_{\alpha\beta} := \frac{\sum_{j=1}^k |w_{\alpha, j}|^2}{\sum_{j=1}^k |w_{\beta,j}|^2},\quad \text{on }V_\alpha\cap V_\beta$$
is never-vanishing and bounded from above. Since the cover is finite we have
\begin{equation}\label{eqn:my 2}
0<c\le \inf_{\alpha,\beta} \inf_{y\in V_\alpha \cap V_\beta\neq \emptyset} f_{\alpha\beta}(y)\le \sup_{\alpha,\beta} \sup_{y\in V_\alpha \cap V_\beta\neq \emptyset} f_{\alpha\beta}(y) \le C<\infty.
\end{equation}
We denote $\sigma_X = \pi^* \sigma_Y$ to be the pulled-back of $\sigma_Y$ to $X$.
\begin{lemma}[see also \cite{PS}]\label{lemma 1.3}
There exists an $\varepsilon_0>0$ such that for all $\varepsilon\in (0, \varepsilon_0]$ the $(1,1)$-form 
$$\omega_\varepsilon: = \pi^*\omega_Y + \varepsilon \ddbar \log \sigma_X$$
is positive definite on $X\backslash E$ and extends to a smooth K\"ahler metric on $X$.
\end{lemma}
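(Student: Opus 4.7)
My plan is to prove the lemma through a local coordinate analysis on standard blow-up charts, exploiting that $\sigma_X$ factorizes locally as $|u_1|^2\cdot h$ with $\{u_1=0\}=E$ and $h$ a smooth positive function. The essential input is the bound \eqref{eqn:my 2} on the ratios $f_{\beta\alpha}$, which controls how the partition-of-unity construction of $\sigma_Y$ behaves as one approaches $N$.

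In a chart $V_\alpha\subset Y$ with coordinates $(w_1,\ldots,w_n)$ such that $N\cap V_\alpha=\{w_1=\cdots=w_k=0\}$, I would first factor
\[
\sigma_Y=\Bigl(\sum_{j=1}^k|w_j|^2\Bigr)\cdot h_\alpha,\qquad h_\alpha=\frac{\theta_0}{\sum_j|w_j|^2}+\sum_{\beta=1}^J\theta_\beta f_{\beta\alpha},
\]
and use that $\theta_0$ vanishes near $N$ together with \eqref{eqn:my 2} to see that $h_\alpha$ is smooth and uniformly bounded above and below by positive constants near $N\cap V_\alpha$. Passing to a standard blow-up chart on $X$ with coordinates $(u_1,\ldots,u_n)$ in which $w_1=u_1$, $w_j=u_1u_j$ for $2\le j\le k$, and $w_j=u_j$ for $j>k$, I would compute
\[
\sigma_X=|u_1|^2\bigl(1+|u_2|^2+\cdots+|u_k|^2\bigr)\cdot(h_\alpha\circ\pi),
\]
so on $X\setminus E$,
\[
\ddbar\log\sigma_X=\ddbar\log\Bigl(1+\sum_{j=2}^k|u_j|^2\Bigr)+\ddbar\log(h_\alpha\circ\pi).
\]
The first term is a Fubini-Study type form, and the second term, once $h_\alpha\circ\pi$ is shown to extend smoothly across $E$, becomes a smooth bounded $(1,1)$-form on $X$.

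For the positivity, I would argue pointwise on $E$. The Fubini-Study term $\ddbar\log(1+\sum_{j\ge 2}|u_j|^2)$ is strictly positive in the fiber directions $\partial_{u_2},\ldots,\partial_{u_k}$ and vanishes transversely, while a short computation of $d\pi$ at $u_1=0$ shows that $\pi^*\omega_Y$ has kernel exactly $\mathrm{span}(\partial_{u_2},\ldots,\partial_{u_k})$ and is strictly positive on the complement $\mathrm{span}(\partial_{u_1},\partial_{u_{k+1}},\ldots,\partial_{u_n})$. The two degeneracies are complementary, so $\pi^*\omega_Y+\varepsilon\ddbar\log(1+\sum_{j\ge 2}|u_j|^2)$ is strictly positive at every $p\in E$ for every $\varepsilon>0$; compactness of $E$ yields a uniform positive lower bound on the smallest eigenvalue in a neighborhood of $E$, so the smooth bounded perturbation $\varepsilon\ddbar\log(h_\alpha\circ\pi)$ can be absorbed by choosing $\varepsilon_0$ small. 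Away from $E$, $\pi^*\omega_Y$ is already K\"ahler and the correction is smooth and bounded on any compact subset, so a common small $\varepsilon_0$ preserves positivity globally on $X$.

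The main obstacle is the smooth extension of $h_\alpha\circ\pi$ across $E$: since $\sigma_Y$ is glued via a partition of unity, the individual ratios $f_{\beta\alpha}$ are in general only continuous across $N$ in $Y$, not smooth there. The resolution is to use Hadamard's lemma on the holomorphic transition to write $w_{\beta,i}=\sum_{j=1}^k A_{ij}(w_\alpha)w_{\alpha,j}$ with $A(0)$ invertible; then, after the substitution $w_{\alpha,j}=u_1\tilde u_j$ (with $\tilde u_1=1$ and $\tilde u_j=u_j$ for $2\le j\le k$), the factor of $|u_1|^2$ cancels in numerator and denominator and $f_{\beta\alpha}\circ\pi$ is exhibited as the quotient of the smooth Hermitian form $\sum_i|\sum_j A_{ij}\tilde u_j|^2$ by $1+\sum_{j\ge 2}|u_j|^2$, hence smooth on the full chart, and strictly positive on $E$ by invertibility of $A$ on $N$ (consistent with \eqref{eqn:my 2}). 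Once this smoothness is in hand, the rest of the argument is routine.
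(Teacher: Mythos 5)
Your local coordinate computation and the decomposition $\ddbar\log\sigma_X = \ddbar\log(1+\sum_{j\ge 2}|u_j|^2) + \ddbar\log(h_\alpha\circ\pi)$ mirror the paper's proof, and your Hadamard-lemma argument for the smooth extension of $h_\alpha\circ\pi$ across $E$ is a genuine improvement: the paper asserts that $\phi_\alpha$ is smooth on $V_\alpha$ itself, which is in general false (the ratios $f_{\beta\alpha}$ are typically only bounded, not continuous, at $N$), and the correct statement is exactly the one you prove, that $\phi_\alpha\circ\pi$ extends smoothly across $E$.

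However, the final positivity step has a genuine gap. You argue that $\pi^*\omega_Y + \varepsilon\ddbar\log(1+\sum_{j\ge 2}|u_j|^2)$ is positive near $E$ and that the ``smooth bounded perturbation'' $\varepsilon\ddbar\log(h_\alpha\circ\pi)$ is absorbed by shrinking $\varepsilon_0$. This does not work: in the fiber directions $\partial_{u_2},\ldots,\partial_{u_k}$, $\pi^*\omega_Y$ vanishes at $E$, so the smallest eigenvalue of $\pi^*\omega_Y + \varepsilon\ddbar\log(1+|u'|^2)$ there is $O(\varepsilon)$, while the perturbation you wish to absorb is \emph{also} $O(\varepsilon)$. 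Making $\varepsilon$ small does not change the relative size, so if $\ddbar\log(h_\alpha\circ\pi)$ happens to be sufficiently negative in those directions the sum could fail to be positive for every $\varepsilon$. The correct observation, which is available precisely from the Hadamard expression you already derived, is that on a fiber $\hat\pi^{-1}(y)$ one has $h_\alpha\circ\pi = H_y(\tilde u)/|\tilde u|^2$ with $\tilde u = (1,u_2,\ldots,u_k)$ and $H_y(\tilde u)=\sum_\beta\theta_\beta(y)\sum_i\bigl|\sum_j A^{(\beta)}_{ij}(y)\,\tilde u_j\bigr|^2$ a positive definite Hermitian form (positive definiteness uses $\sum_\beta\theta_\beta\equiv 1$ near $N$ together with invertibility of the $A^{(\beta)}$). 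Therefore the restriction of the \emph{full} sum $\ddbar\log(1+|u'|^2)+\ddbar\log(h_\alpha\circ\pi)$ to the fiber is $\ddbar\log H_y(\tilde u)$, a Fubini--Study metric for the inner product $H_y$, hence strictly positive on the fiber \emph{independently} of any smallness of $\varepsilon$; it is not the first summand alone plus a perturbation. Once this is in hand the positivity follows: on the fibers $\varepsilon\ddbar\log\sigma_X$ is of order $\varepsilon$ and positive, transverse to the fibers $\pi^*\omega_Y$ is uniformly positive, and the $O(\varepsilon)$ cross terms are killed by the uniform transverse positivity via a Schur-complement estimate, after which shrinking $\varepsilon_0$ is legitimate.
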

\begin{proof}
We only need to prove the positivity of $\omega_\varepsilon$ near $E$, which is in fact local. So we may assume the map $\pi$ is defined from an open set $U\subset X$ to $V_\alpha$ given by $$w_{\alpha, 1} = z_1,\, w_{\alpha,2} = z_1 z_2,\cdots, w_{\alpha, k} = z_1 z_k,\, w_{\alpha, k+1} = z_{k+1},\cdots, w_{\alpha, n} = z_n,$$
where $\{z_i\}$ are the complex coordinates on $U$ such that $E\cap U = \{z_1 = 0\}$. It loses no loss of generality by assuming $\omega_Y$ on $V_\alpha$ is just the Euclidean metric $\omega_{\mathbb C^n} = \sum_j  i dw_{\alpha, j}\wedge d\bar w_{\alpha, j}$.

We note that on $V_\alpha$ $$\sigma_Y =\Big( \sum_{\beta = 1, V_\beta\cap V_\alpha \neq \emptyset}^J \theta_\beta f_{\beta \alpha}\Big) \cdot \sum_{j=1}^k |w_{\alpha, j}|^2 = : \phi_\alpha \cdot \sum_{j=1}^k |w_{\alpha,j}|^2.$$
From \eqref{eqn:my 2}, we know that $\phi_\alpha$ is a smooth function with a strict positive lower bound on $V_\alpha$. In particular $\omega_Y + \varepsilon \ddbar \log \phi_\alpha>0$ on $V_\alpha$ for any $0< \varepsilon\le \varepsilon_0<<1$.  

We calculate
\begin{equation}\label{eqn:pullback}\begin{split}
\pi^* \omega_Y =& (1+\sum_{j=2}^k |z_j|^2) dz_1\wedge d\bar z_1 + \sum_{j=2}^k (z_1 \bar z_j dz_j \wedge d\bar z_1 + \bar z_1 z_j dz_1 \wedge d\bar z_j)\\
& \quad + |z_1|^2 \sum_{j=2}^k dz_j \wedge d\bar z_j +\sum_{j=k+1}^n dz_j\wedge d\bar z_j,
 \end{split}\end{equation}
and note that on $U$ $$\sigma_X = \log \phi_\alpha + \log  |z_1|^2 + \log (1+ \sum_{j=2}^k |z_j|^2  ),$$ so on $U\backslash E$ we have
\begin{equation}\label{eqn:pullback 1}
\ddbar \log \sigma_X = \ddbar \log \phi_\alpha + \frac{\sum_{i, j=2}^k ( (1+|z'|^2)\delta_{ij} - \bar z_i z_j) \sqrt{-1}dz_i \wedge d\bar z_j}{(1+|z'|^2)^2},
\end{equation}
where $z' = (z_2,\ldots, z_k)$ and the second term on RHS is nonnegative in $z'$-directions, which is just the Fubini-Study metric in the coordinates $z'$. By straightforward calculations, we see that if $\varepsilon$ is small enough the $(1,1)$-form $\pi^* \omega_Y + \varepsilon \ddbar \log \sigma_X$ is positive on $X\backslash E$ and extends to a K\"ahler metric on $X$.

\end{proof}
\begin{remark}
Globally from the above calculations we see that $$\omega_\varepsilon = \pi^*\omega_Y + \varepsilon \ddbar \log \sigma_X - \varepsilon [E],$$where $[E]$ denotes the current of integration along $E$.
\end{remark}

We will denote $\omega_X = \pi^* \omega_Y + \varepsilon_0\ddbar \log \sigma_X - \varepsilon_0[E]$ to be a fixed K\"ahler metric obtained from the Lemma \ref{lemma 1.3}. The following inequality follows from the local expression of $\pi^* \omega_Y$ as in the proof of Lemma \ref{lemma 1.3}.
\begin{lemma} 
There exists a uniform constant $C>1$ such that
\begin{equation}\label{eqn:my 3}
C^{-1} \hat \omega_Y \le \omega_X \le  \frac{C}{\sigma_X}\hat \omega_Y,
\end{equation}
where the second inequality is understood on $X\backslash E$.
\end{lemma}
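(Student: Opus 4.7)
The first inequality $C^{-1}\hat\omega_Y\le \omega_X$ is essentially free by compactness: $\omega_X$ is a smooth K\"ahler metric on the compact manifold $X$ and $\hat\omega_Y=\pi^*\omega_Y$ is a smooth nonnegative $(1,1)$-form, so globally $\hat\omega_Y\le C\omega_X$. The content is in the second inequality $\omega_X\le (C/\sigma_X)\hat\omega_Y$, which is local near $E$ (away from $E$ the function $\sigma_X$ is bounded below and both forms are smooth K\"ahler, so the inequality is trivial there). My plan is to work in the coordinate charts $U\subset X$ used in the proof of Lemma \ref{lemma 1.3}, where $\pi$ takes the form $w_{\alpha,1}=z_1$, $w_{\alpha,j}=z_1 z_j$ for $2\le j\le k$, $w_{\alpha,j}=z_j$ for $j>k$, and $E\cap U=\{z_1=0\}$. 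Using $\sigma_Y=\phi_\alpha\sum_{j=1}^k|w_{\alpha,j}|^2$ on $V_\alpha$ and the uniform positivity of $\phi_\alpha$ from \eqref{eqn:my 2}, a direct substitution gives $\sigma_X=\phi_\alpha|z_1|^2(1+|z'|^2)$, which is comparable to $|z_1|^2$ on $U$. The statement thus reduces to the pointwise Hermitian bound $|z_1|^2\omega_X\le C\pi^*\omega_Y$ on $U$.

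To establish this bound, I would evaluate both forms on an arbitrary tangent vector $V=\sum_j V_j\partial_{z_j}$. From $dw_1=dz_1$ and $dw_j=z_j dz_1+z_1 dz_j$ for $2\le j\le k$ (equivalently, reading off from \eqref{eqn:pullback}), I obtain
$$\pi^*\omega_Y(V,\bar V)=|V_1|^2+\sum_{j=2}^k|z_j V_1+z_1 V_j|^2+\sum_{j>k}|V_j|^2.$$
The elementary inequality $|z_1|^2|V_j|^2\le 2|z_j V_1+z_1 V_j|^2+2|z_j|^2|V_1|^2$ for $2\le j\le k$, combined with the trivial bound $|z_1|^2|V_j|^2\le C|V_j|^2$ for $j=1$ and $j>k$ and the boundedness of $|z_j|$ on $U$, then gives $|z_1|^2|V|^2\le C\,\pi^*\omega_Y(V,\bar V)$. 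Since $\omega_X$ is smooth K\"ahler on $\overline U$ it is dominated by a multiple of the Euclidean form, $\omega_X\le C\sum_i i\,dz_i\wedge d\bar z_i$, and combining these estimates with $\sigma_X\simeq |z_1|^2$ yields the desired inequality.

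The only substantive point in this plan is the lower bound $\pi^*\omega_Y(V,\bar V)\gtrsim |z_1|^2|V|^2$ on $U$, which captures the precise rate at which $\pi^*\omega_Y$ degenerates in the exceptional directions transverse to $E$ and is the exact reason why the factor $1/\sigma_X$ (and not a worse power) suffices in \eqref{eqn:my 3}. Once this is in hand, the rest of the argument reduces to routine compactness and boundedness on a bounded coordinate patch, and I do not anticipate any further conceptual obstacle.
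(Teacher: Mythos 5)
Your proof is correct and follows exactly the route the paper indicates (the paper omits the argument, saying only that the inequality follows from the local expression \eqref{eqn:pullback} of $\pi^*\omega_Y$). Your computation of $\pi^*\omega_Y(V,\bar V)=|V_1|^2+\sum_{j=2}^k|z_jV_1+z_1V_j|^2+\sum_{j>k}|V_j|^2$, the elementary inequality giving $|z_1|^2|V|^2\lesssim\pi^*\omega_Y(V,\bar V)$, and the identification $\sigma_X\simeq|z_1|^2$ on $U$ are precisely the intended steps.
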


\section{The proof of the main theorem}

Now we are ready to derive the crucial estimates on $\omega$ along the K\"ahler-Ricci flow \eqref{eqn:KRF}.
\begin{lemma}\label{lemma 1.5}
There exists uniform constants $C>0$ and $\delta\in (0,1)$ such that along the flow \eqref{eqn:KRF} we have 
\begin{equation}\label{eqn:main}
\omega\le  C \frac{\omega_0}{\sigma_X^{1-\delta}},\quad \text{on }X\backslash E \times [0, T).
\end{equation}

\end{lemma}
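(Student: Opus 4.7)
The plan is to apply the parabolic maximum principle to an auxiliary function of the form
$$Q = \log \tr_{\omega_X}\omega + (1-\delta)\log \sigma_X - A\varphi$$
on $(X\setminus E)\times[0,T)$, for constants $\delta \in (0,1)$ and $A>0$ to be chosen. Since $\omega_t$ is smooth on $X$ for every fixed $t<T$ and $(1-\delta)\log\sigma_X \to -\infty$ on $E$, the function $Q(\cdot,t)$ tends to $-\infty$ near $E$; hence $Q$ attains its supremum over $(X\setminus E)\times[0,T)$ either at $t=0$, where smoothness of the initial data gives a trivial bound, or at an interior spacetime maximum, which is the essential case.

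At such an interior maximum $(x_0,t_0)$, $(\partial_t-\Delta)Q\ge 0$. I combine the parabolic Aubin--Yau/Chern--Lu inequality
$$(\partial_t - \Delta)\log \tr_{\omega_X}\omega \le C_1\,\tr_\omega \omega_X$$
(where $C_1$ depends on the bisectional curvature of $\omega_X$), the identity
$$(\partial_t - \Delta)\log \sigma_X = \varepsilon_0^{-1}\bigl(\tr_\omega \hat\omega_Y - \tr_\omega\omega_X\bigr)$$
(which follows from the remark after Lemma~\ref{lemma 1.3}), and the evolution $(\partial_t-\Delta)\varphi = \dot\varphi - n + \tr_\omega\hat\omega_t$. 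The resulting inequality at $(x_0,t_0)$ reads
$$0 \le \Bigl(C_1 - \tfrac{1-\delta}{\varepsilon_0}\Bigr)\tr_\omega \omega_X + \tfrac{1-\delta}{\varepsilon_0}\tr_\omega\hat\omega_Y - A\bigl(\dot\varphi - n + \tr_\omega\hat\omega_t\bigr).$$
By choosing $\varepsilon$ in Lemma~\ref{lemma 1.3} sufficiently small (if necessary, replacing $\omega_X$ by a fresh $\omega_\varepsilon$) so that $C_1\varepsilon < 1-\delta$, the coefficient of $\tr_\omega\omega_X$ becomes strictly negative. Using $\tr_\omega\hat\omega_Y\le C$ from Lemma~\ref{lemma 1.2}\ref{item lemma 1.2}, the upper bound $\dot\varphi\le C$ from Lemma~\ref{lemma 1.1}, and choosing $A$ large to absorb the $A\tr_\omega\hat\omega_t$ contribution, I extract $\tr_\omega\omega_X\le C$ at $(x_0,t_0)$.

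To promote this bound on $\tr_\omega\omega_X$ into a bound on $\tr_{\omega_X}\omega$, I invoke the Maclaurin-type inequality
$$\tr_{\omega_X}\omega \le C_n\,(\tr_\omega\omega_X)^{n-1}\,\frac{\omega^n}{\omega_X^n}$$
together with $\omega^n/\omega_X^n \le C$, which follows from the Monge--Amp\`ere equation $\omega^n = e^{\dot\varphi}\Omega$ and $\dot\varphi\le C$. This bounds $\tr_{\omega_X}\omega$ at the maximum of $Q$, so $Q$ is uniformly bounded above, i.e.\ $\tr_{\omega_X}\omega \le C\sigma_X^{-(1-\delta)}$ globally. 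Since $\omega_X$ and $\omega_0$ are equivalent K\"ahler metrics on the compact $X$ and $\omega\le(\tr_{\omega_X}\omega)\,\omega_X$, the estimate~\eqref{eqn:main} follows. The principal obstacle is quantitative: the bisectional curvature of $\omega_X$ in the fiber directions near $E$ scales as $1/\varepsilon$, so the product $C_1\varepsilon$ is of order one, making the key condition $C_1\varepsilon<1-\delta$ borderline. Closing this parameter gap likely requires either a sharper Aubin--Yau estimate that exploits the product-like structure of $\omega_X$ near $E$, or an additional correction term built from the gradient inequality~\eqref{eqn:my 1}.
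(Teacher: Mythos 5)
Your proposal has a genuine gap, one you partially acknowledge but do not resolve, and it is not a gap that can be patched without changing the structure of the argument. The auxiliary function $Q = \log\tr_{\omega_X}\omega + (1-\delta)\log\sigma_X - A\varphi$ produces, via $\ddbar\log\sigma_X = \varepsilon_0^{-1}(\omega_X - \hat\omega_Y)$, a favorable term $-\tfrac{1-\delta}{\varepsilon_0}\tr_\omega\omega_X$ to fight the curvature term $C_1\tr_\omega\omega_X$. But you need $C_1\varepsilon_0 < 1-\delta$ with $\delta>0$, while $C_1$ — a curvature bound for $\omega_X = \omega_{\varepsilon_0}$ near $E$, where the fiber part of $\omega_{\varepsilon_0}$ is $\varepsilon_0\,\omega_{FS}$ with curvature of order $1/\varepsilon_0$ — is not shown to satisfy this. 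There is no free parameter left: shrinking $\varepsilon_0$ changes $C_1$ in the same order, and increasing the coefficient in front of $\log\sigma_X$ beyond $1$ only degrades the final exponent. The proof as written therefore does not close, and your concluding paragraph says as much.

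The paper's proof avoids this interplay entirely, and the mechanism is worth internalizing. First, the log-trace term is $\log\tr_{\omega_0}\omega$, not $\log\tr_{\omega_X}\omega$: the curvature constant in the Chern--Lu inequality then depends on $\omega_0$ and is independent of $\varepsilon_0$. Second, the positivity needed to beat that constant comes not from $\ddbar\log\sigma_X$ alone but from the combination $A\hat\omega_t + (1+\epsilon)\ddbar\log\sigma_X$, which is $\geq c_2\,\omega_0$ once $A$ is large (by Lemma~\ref{lemma 1.3}, scaling so that $\tfrac{1+\epsilon}{Ac_1}\leq\varepsilon_0$); pairing this with the barrier $-A^2\varphi$ gives a term $-Ac_2\tr_\omega\omega_0$ that dominates the fixed curvature constant simply because $A$ is large. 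Third — and this is the point your ansatz cannot replicate — the auxiliary function is $\log\tr_{\omega_0}\omega + A\log\bigl(\sigma_X^{1+\epsilon}\tr_{\hat\omega_Y}\omega\bigr) - A^2\varphi$: it carries a \emph{second} log-trace $A\log\tr_{\hat\omega_Y}\omega$. After bounding $\sup Q_\epsilon$ and letting $\epsilon\to0$, the inequality $\tr_{\hat\omega_Y}\omega\geq c\,\tr_{\omega_0}\omega$ converts that second term into another copy of $A\log\tr_{\omega_0}\omega$, yielding $(1+A)\log\tr_{\omega_0}\omega + A\log\sigma_X\leq C$ and hence $\tr_{\omega_0}\omega\leq C\sigma_X^{-A/(1+A)}$. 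The H\"older exponent $\delta = 1/(1+A)$ is not chosen in advance; it falls out of the argument once $A$ is fixed. By contrast, your $Q$ hard-codes the exponent $1-\delta$ into the coefficient of $\log\sigma_X$ and has no second trace term, so even if the sign condition held, you would obtain $\tr_{\omega_X}\omega\leq C\sigma_X^{-(1-\delta)}$ only for those $\delta$ compatible with the (unverified) curvature constraint — precisely the bind you describe.

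One smaller point: your maximum-principle step discards the $-A\dot\varphi$ contribution without comment, and $\dot\varphi$ has no lower bound. The paper handles the analogous term $A^2\log(\Omega/\omega^n)$ by trading it against a fraction of the negative trace term using $\log x \leq \varepsilon x + C(\varepsilon)$; a similar trade would be needed in your setup, and you should make it explicit rather than fold it into ``choosing $A$ large.''
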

The proof is almost the same as that of Lemma 2.5 in \cite{SW1}, with minor modification using Lemma \ref{lemma 1.3}. For completeness, we provide a sketched proof. 
\begin{proof}
Fix an $\epsilon\in (0,1)$ and define $$Q_\epsilon = \log \tr_{\omega_0} \omega + A \log \sigma_X^{1+\epsilon} \tr_{\hat\omega_Y }\omega - A^2 \varphi,$$
where $A>0$ is a constant to be determined later. First of all, $Q_\epsilon|_{t= 0}\le C$ for a constant $C$ independent of $\epsilon\in (0,1)$, which can be seen from \eqref{eqn:my 3}. Observe that for each time $t_0\in (0,T)$, $\max_X Q_\epsilon$ can only be achieved on $X\backslash E$, since $Q_\epsilon(x)\to -\infty $ as $x\to E$. Thus we assume the maximum of $Q_\epsilon$ is obtained at $(x_0,t_0)$ for some $x_0\in X\backslash E$. From the Chern-Lu inequality the following holds on $X\backslash E$
\begin{align*}
(\frac{\partial}{\partial t} - \Delta) Q_\epsilon \le C \tr_\omega \omega_0 - A \tr_\omega ( A\hat \omega_t + (1+\epsilon)\ddbar \log \sigma_X)+ A^2 \log\frac{\Omega}{\omega^n} + C,
\end{align*} 
where the constant $C$ depends on the lower bound of the bisectional curvature of $(X,\omega_0)$ and the upper bound of bisectional curvature of $(Y,\omega_Y)$. Since $\hat \omega_t\ge c_1 \hat \omega_Y$ for a uniform $c_1>0$ and any $t\in [0,T)$, by Lemma \ref{lemma 1.3} for $A>0$ large enough  $A \hat \omega_t + (1+ \epsilon )\ddbar \log \sigma_X \ge c_2 \omega_0$ on $X\backslash E$ for some $c_2>0$. If $A>0$ is taken even larger then at $(x_0,t_0)$, we have
\begin{align*}
0\le (\frac{\partial}{\partial t} - \Delta) Q_\epsilon \le - 2 \tr_\omega \omega_0 +  A^2 \log\frac{\Omega}{\omega^n} + C\le - \tr_\omega\omega_0 + C,
\end{align*} 
where in the last inequality we use 
$$-\tr_\omega \omega_0 + A^2 \log \frac{\Omega}{\omega^n}\le -\tr_\omega \omega_0 + n A^2 \log \tr_\omega \omega_0 + C \le C,$$
as $\log x \le \varepsilon x + C(\varepsilon)$ for any $x\in (0,\infty)$. So we have $\tr_\omega \omega_0(x_0,t_0)\le C$. Then 
\begin{equation*}
\tr_{\omega_0}\omega|_{(x_0,t_0)}\le \frac{\omega^n}{\omega_0^n } (\tr_\omega \omega_0)^{n-1}|_{(x_0,t_0)}\le C.
\end{equation*}
Observing that from \eqref{eqn:my 3}, $\sigma_X \tr_{\hat \omega_Y} \omega \le C \tr_{\omega_0} \omega$ on $X\backslash E$, thus $\sup_X Q_\epsilon\le C$ for some uniform constant $C>0$. Letting $\epsilon\to 0$, we  get 
\begin{equation*}
\log \tr_{\omega_0} \omega + A \log \sigma_X \tr_{\hat \omega_Y } \omega \le C,\quad \text{on }X\backslash E \times [0,T).
\end{equation*}
Finally from $ C \tr_{\hat \omega_Y}\omega \ge \tr_{\omega_0}\omega$ we see from the above that 
$$\log \tr_{\omega_0} \omega + \log \sigma_X^A (\tr_{\omega_0} \omega)^A \le C,$$
so $\tr_{\omega_0}\omega \le C {\sigma_X^{- A/(1+A)}}$ on $X\backslash E$, and we can then take $\delta = \frac{1}{1+A}\in (0,1)$.
\end{proof}

Next we will show the distance function defined by $ \omega_t$ is H\"older-continuous with respect to the fixed metric $(X,\omega_0)$. 
\begin{lemma}\label{lemma 1.6}
There exists a uniform constant $C>0$ such that for any $p,q\in X$, it holds that
\begin{equation*}
d_{\omega_t}(p,q)\le C d_{\omega_0}(p,q)^\delta,\quad \forall ~ t\in [0,T),
\end{equation*}
where $\delta \in (0,1)$ is the constant determined in Lemma \ref{lemma 1.5}.
\end{lemma}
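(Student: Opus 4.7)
The plan is to construct, for any $p,q\in X\setminus E$, a piecewise smooth curve $\gamma$ connecting them in $X\setminus E$ whose $\omega_t$-length is bounded by $Cd_{\omega_0}(p,q)^\delta$, and then use density of $X\setminus E$ in $X$ together with continuity of the (smooth, fixed-$t$) distance function $d_{\omega_t}$ to extend the estimate to all of $X$. The starting observation is that Lemma \ref{lemma 1.5} gives the pointwise comparison $|v|_{\omega_t}^2\le C\sigma_X^{-(1-\delta)}|v|_{\omega_0}^2$ for every tangent vector $v$ on $X\setminus E$, so
\[
L_{\omega_t}(\gamma)\le C\int_0^L \sigma_X(\gamma(s))^{-(1-\delta)/2}\,|\dot\gamma(s)|_{\omega_0}\,ds
\]
for any smooth curve $\gamma\subset X\setminus E$ parametrized by $\omega_0$-arc length.

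The local input comes from the chart of Lemma \ref{lemma 1.3}: with $E=\{z_1=0\}$ one has $\sigma_X\sim|z_1|^2$, so on a radial path $z_1(s)=se^{i\theta}$ the integrand reduces to $s^{-(1-\delta)}$, integrable for $\delta>0$, with $\int_a^b s^{-(1-\delta)}ds\le C b^\delta/\delta$. This is the reservoir from which the Hölder exponent $\delta$ emerges.

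Write $r=d_{\omega_0}(p,q)$. \emph{Case 1}: if $d_{\omega_0}(p,E)\ge r$ and $d_{\omega_0}(q,E)\ge r$, the triangle inequality forces any $\omega_0$-minimizing geodesic from $p$ to $q$ to stay at $\omega_0$-distance $\ge r/2$ from $E$, where $\sigma_X\ge cr^2$, hence $L_{\omega_t}(\gamma)\le Cr^{-(1-\delta)}\cdot r=Cr^\delta$. \emph{Case 2}: if $\min(d_{\omega_0}(p,E),d_{\omega_0}(q,E))<r$ and $r$ is below a small geometric threshold, both points sit inside a single tubular neighborhood of a connected component of $E$ covered by one chart of Lemma \ref{lemma 1.3}. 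I then concatenate three segments: (i) a radial arc from $p$ (increasing $|z_1|$) to a point $p'$ with $|z_1(p')|\sim r$; (ii) an analogous lift of $q$ to $q'$; (iii) a curve from $p'$ to $q'$ lying in $\{|z_1|\ge cr\}$ of $\omega_0$-length $\le Cr$, which exists because $p',q'$ sit at mutual $\omega_0$-distance $\le 3r$ on the hypersurface $\{|z_1|\sim r\}$. Segments (i) and (ii) contribute $\int s^{-(1-\delta)}ds\le Cr^\delta$ each, and (iii) is handled exactly as in Case 1.

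The main obstacle is the global packaging. The local model of Lemma \ref{lemma 1.3} is chart-dependent, and for $r$ above the threshold the three-segment construction must be replaced by an absolute diameter-type bound. Here I use compactness of $E$ to cover it by finitely many charts of Lemma \ref{lemma 1.3}, which makes the construction in Case 2 uniform. For $r$ above the threshold, I apply the same construction with $r$ replaced by a fixed $r_0>0$: push $p$ and $q$ out of a small fixed neighborhood of $E$ into the region $\{\sigma_X\ge cr_0^2\}$ where $\omega_t\le C(r_0)\omega_0$ by Lemma \ref{lemma 1.5}, and connect the lifted points by a path of bounded $\omega_0$-length. This gives $d_{\omega_t}(p,q)\le C$, which dominates $Cr^\delta$ once $r$ is bounded below. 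All constants are then absorbed into a single uniform $C$ depending only on $(X,\omega_0),(Y,\omega_Y)$ and the finite chart cover.
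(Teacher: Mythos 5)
Your proposal follows essentially the same route as the paper: both rest on the pointwise degeneration estimate $\omega_t\lesssim \sigma_X^{-(1-\delta)}\omega_0$ from Lemma \ref{lemma 1.5}, a finite chart cover near $E$ in which $\sigma_X\sim|z_1|^2$, a reduction of the large-$d_{\omega_0}(p,q)$ case to a global diameter bound, and a piecewise path built from radial $z_1$-arcs (whose $\omega_t$-length is controlled by $\int s^{-(1-\delta)}ds$) plus a segment kept at $|z_1|\gtrsim r$. Your two-case split, together with the density/continuity argument for letting $p$ or $q$ lie in $E$, is just a repackaging of the paper's three cases.

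There is, however, a genuine gap in your Case 2, segment (iii). You assert that there exists a curve from $p'$ to $q'$ of $\omega_0$-length $\le Cr$ staying in $\{|z_1|\ge cr\}$, ``because $p',q'$ sit at mutual $\omega_0$-distance $\le 3r$ on the hypersurface $\{|z_1|\sim r\}$''. This does not follow: since you lifted $p$ and $q$ radially, the $z_1$-phases of $p'$ and $q'$ are inherited from $p$ and $q$ and can be arbitrary. If, say, $z_1(p')=r$ and $z_1(q')=-r$, the Euclidean segment $\overline{p'q'}$ passes straight through $\{z_1=0\}=E$, so the naive line is useless and $d_{\omega_0}(p',q')\le 3r$ alone does not produce a short path \emph{within} $\{|z_1|\ge cr\}$. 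The claim is rescuable --- travel along the arc $|z_1|=r$ in the $z_1$-plane (length $\le\pi r$) while interpolating the remaining coordinates linearly, which keeps $|z_1|\equiv r$ and total length $O(r)$ --- but this needs to be said. The paper avoids the issue entirely by rotating the local coordinates in its Case 1 so that the two waypoints share the same $z_1$-coordinate $d$ exactly, making the connecting segment lie automatically on $\{|z_1|=d\}$; you should either adopt that normalization or supply the arc argument explicitly. (A cosmetic point: the constant thresholds in the two cases do not match --- Case 1 assumes both distances $\ge r$, while Case 2 assumes the minimum is $<r$ --- which is fine, but your constants $c,C$ and the ``$\sim$'' in $|z_1(p')|\sim r$ should be fixed once and for all so the integrals and lengths are honestly comparable.)
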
  
\begin{proof}
It suffices to prove the estimate near $E$, say on $T(E)$, a tubular neighborhood of $E$, since $\omega_t$ is uniformly equivalent to $\omega_0$ outside $T(E)$. Choose coordinates charts $\{U_\alpha\}$ covering $T(E)$ and local coordinates $\{z_{\alpha, i}\}_{i=1}^n$ such that $U_\alpha\cap E = \{z_{\alpha,1} = 0\}$. We may assume that the cover is fine enough such that any $p,q\in T(E)$ with $d_{\omega_0}(p,q)\le \frac 1 2 $ must lie in the same $U_\alpha$.  Since we have only finitely many such $U_\alpha$, we will work on one of them only and omit the subscript $\alpha$ for simplicity. Furthermore the fixed K\"ahler metric $\omega_0$ is uniformly equivalent to the Euclidean metric $\omega_{\mathbb C^n}$ on $U$, so without loss of generality we assume $\omega_0 = \omega_{\mathbb C^n}$ on $U$. Recall that Lemma \ref{lemma 1.5} implies that on $U\backslash E$ it holds that
\begin{equation}\label{eqn:my 4}
\omega_t \le C \frac{\omega_{\mathbb C^n}}{|z_1|^{2(1-\delta)}},\quad \forall t\in [0,T),
\end{equation} since $\sigma_X \sim |z_1|^2$ on $U$.

Take any two points $p,q\in U$ with $d_{\omega_0}(p,q) = d< \frac 1 4$. We will consider different cases depending on the positions of $p,q$.

\medskip

\noindent $\bullet$ {\bf Case 1:} $p,q\in E$. Rotate the coordinates if necessary we may assume $p = 0$ and $q = (0,d,0,\ldots,0)$. We pick two points $\tilde p = (d, 0 ,\ldots, 0)$ and $\tilde q = (d,d,0,\ldots, 0)$ as shown the picture below. From \eqref{eqn:my 4}, we have
\begin{figure}[h]
\includegraphics[scale = 0.5]{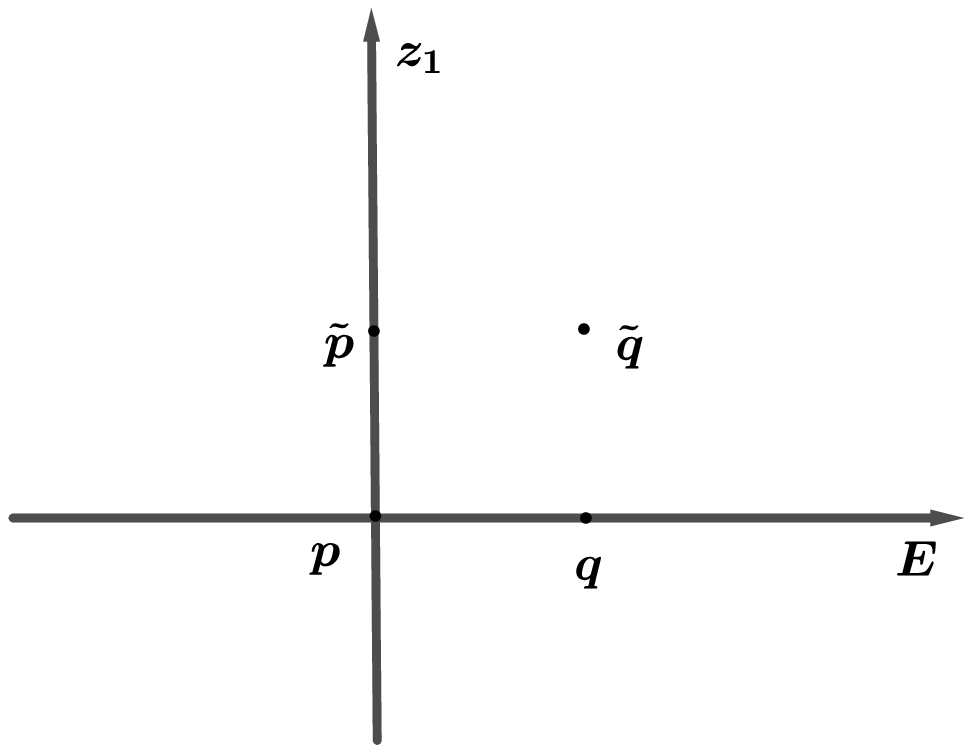}
\end{figure}
\begin{equation*}
d_{\omega_t} (p,\tilde p) \le L_{\omega_t}(\overline{p \tilde p})\le C \int_0^d \frac{1}{r^{1-\delta}}dr \le C d^\delta,
\end{equation*}where $\overline{p\tilde p}$ denotes the (Euclidean) line segment connecting $p$ and $\tilde p$. 
Similarly $d_{\omega_t}(q,\tilde q)\le C d^\delta$. On the other hand, 
$$d_{\omega_t}(\tilde p,\tilde q) \le L_{\omega_t}(\overline{\tilde p\tilde q}) \le \frac{C}{d^{1-\delta}} L_{\omega_{\mathbb C^n}}(\overline{\tilde p\tilde q}) = C d^\delta.$$ If we denote $\gamma = \overline{p\tilde p} + \overline{\tilde p\tilde q} + \overline{q \tilde q}$ to be the piecewise line segment connecting $p$ and $q$, then we have
%
\begin{equation*}
d_{\omega_t}(p,q)\le L_{\omega_t}(\gamma)\le C d^\delta = C d_{\omega_0}(p,q)^{\delta}.
\end{equation*}
We remark that $\gamma\subset X\backslash E$, except the two end points $p,q$.

\medskip

\noindent $\bullet$ {\bf Case 2.} $\min(d_{\omega_0}(p,E), d_{\omega_0}(q,E) )\le d$. The (Euclidean) projections of $p,q$ to $E$, denoted by $p', q'$, respectively, must satisfy $d_{\omega_0}(p',q')\le d$. From the assumption it follows that $d_{\omega_0}(p,p')\le 2d$ and $d_{\omega_0}(q,q')\le 2d$. By similar arguments as above using \eqref{eqn:my 4} we have 
$$d_{\omega_t}(p,p')\le C d^\delta,\quad d_{\omega_t}(q,q')\le C d^\delta,$$
and by {\bf Case 1} $d_{\omega_t}(p',q')\le C d_{\omega_0}(p',q')^\delta \le C d^\delta$. By triangle inequality we get  the desired estimate $d_{\omega_t}(p,q)\le C d^\delta$.

\medskip

\noindent $\bullet$ {\bf Case 3.} $\min(d_{\omega_0}(p,E), d_{\omega_0}(q,E) )\ge d$. Every point in the (Euclidean) line segment $\overline{pq}$ has norm of  $z_1$-coordinates no less than $d$, therefore 
$$d_{\omega_t} (p,q)\le L_{\omega_t} (\overline{pq})\le C d^{-(1-\delta)} L_{\omega_{\mathbb C^n}} (\overline{p,q})  = C d^\delta.$$

\medskip

Combining the all the cases above, we finish the proof of the lemma.

\end{proof}

Next we will prove the H\"older continuity of $\varphi_t$ with respect to $(X,\omega_0)$. To begin with, we first prove the gradient estimate of $\Phi:= (T-t)\dot \varphi + \varphi$ with respect to the evolving metrics $(X,\omega_t)$ (c.f. \cite{FGS}). 
\begin{lemma}\label{lemma 1.7}
There exists a uniform constant $C>0$ such that
\begin{equation*}
\sup_X |\nabla_{\omega_t} \Phi|_{\omega_t}\le C,\quad \forall ~ t\in [0,T).
\end{equation*}
\end{lemma}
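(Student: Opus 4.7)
The plan is to apply the parabolic maximum principle to the auxiliary function
\[ Q = |\nabla \Phi|^2_\omega + A\,\tr_\omega\hat\omega_Y + B\Phi^2, \]
for suitable constants $A,B>0$, so the first task is to establish a uniform bound on $\Phi$ itself. Using $\dot{\hat\omega}_t = \tfrac{1}{T}(\hat\omega_Y-\omega_0) = \ddbar\log\Omega$ and differentiating \eqref{eqn:MA}, one computes
\[ \ho \Phi = \tr_\omega\hat\omega_Y - n, \]
which is bounded between $-n$ and a constant by the inequality $\hat\omega_Y\le C\omega$ of Lemma \ref{lemma 1.2}. Together with the smooth bounded initial datum $\Phi(\cdot,0)=T\log(\omega_0^n/\Omega)$, the parabolic maximum principle applied to $\Phi\pm ct$ yields the uniform bound $|\Phi|\le C$ on $X\times[0,T)$.

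Next I will write down the parabolic Bochner identity for $|\nabla\Phi|^2_\omega$ along the K\"ahler--Ricci flow. The Ricci contribution from $\partial_t g^{i\bar j}=R^{i\bar j}$ cancels exactly against the Ricci term arising in the Bochner formula on $(X,\omega)$, leaving the clean identity
\[ \ho |\nabla\Phi|^2 = 2\,\mathrm{Re}\,\innpro{\nabla\tr_\omega\hat\omega_Y,\nabla\Phi} - |\nabla\nabla\Phi|^2 - |\nabla\bar\nabla\Phi|^2. \]
The Hessian terms are nonpositive and will be dropped; the real obstacle is the cross term, because $\nabla\tr_\omega\hat\omega_Y$ has no pointwise bound a priori. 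To control it I will combine with $A\,\tr_\omega\hat\omega_Y$ and invoke inequality \eqref{eqn:my 1},
\[ \ho \tr_\omega\hat\omega_Y \le -c_0|\nabla\tr_\omega\hat\omega_Y|^2 + C, \]
whose good negative gradient term absorbs the Cauchy--Schwarz splitting $2|\nabla\tr_\omega\hat\omega_Y||\nabla\Phi|\le \tfrac{Ac_0}{2}|\nabla\tr_\omega\hat\omega_Y|^2+\tfrac{2}{Ac_0}|\nabla\Phi|^2$. This leaves a residual positive term $\tfrac{2}{Ac_0}|\nabla\Phi|^2$ in $\ho(|\nabla\Phi|^2+A\,\tr_\omega\hat\omega_Y)$.

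To eliminate this residue I will use $B\Phi^2$ as a barrier. Since $|\Phi|\le C$,
\[ \ho \Phi^2 = 2\Phi(\tr_\omega\hat\omega_Y - n) - 2|\nabla\Phi|^2 \le C - 2|\nabla\Phi|^2, \]
so fixing $A$ large and then $B>\tfrac{1}{Ac_0}$ makes the coefficient of $|\nabla\Phi|^2$ in $\ho Q$ strictly negative. Since $\omega(t)$ is smooth on the compact manifold $X$ for each $t<T$, $Q$ attains its maximum at some $(x_0,t_0)$; if $t_0>0$, then $\ho Q(x_0,t_0)\ge 0$ forces $|\nabla\Phi|^2(x_0,t_0)\le C$, and the already-established bounds on $\Phi$ and $\tr_\omega\hat\omega_Y$ give $\sup Q\le C$, hence $\sup_X|\nabla_{\omega_t}\Phi|_{\omega_t}\le C$ uniformly in $t\in[0,T)$. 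The main technical obstacle is handling the cross term $2\,\mathrm{Re}\,\innpro{\nabla\tr_\omega\hat\omega_Y,\nabla\Phi}$: without both the negative gradient term in \eqref{eqn:my 1} and the quadratic barrier $B\Phi^2$, it does not seem possible to close the maximum-principle argument.
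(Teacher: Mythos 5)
Your proof is correct, and it takes a genuinely different route from the paper. Both arguments start from the same ingredients: the evolution identity $\ho\Phi=\tr_\omega\hat\omega_Y-n$, the uniform bound $|\Phi|\le C$, the parabolic Bochner formula (where the Ricci terms cancel exactly as you say), and the Chern--Lu inequality \eqref{eqn:my 1} with its good term $-c_0|\nabla\tr_\omega\hat\omega_Y|^2$. Where you diverge is in the choice of auxiliary function. The paper works with the quotient $G=\frac{|\nabla\Phi|^2+A\tr_\omega\hat\omega_Y}{B-\Phi}$, which generates an extra linear term $CG$ from the derivatives of the denominator; to dominate it, the paper retains the Hessian contributions $-|\nabla\nabla\Phi|^2-|\nabla\bar\nabla\Phi|^2$ and converts them, at the maximum point, into a good $-G^2/4$ term via the first-order condition $\nabla G=0$ together with Kato's inequality. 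Your sum $Q=|\nabla\Phi|^2+A\tr_\omega\hat\omega_Y+B\Phi^2$ sidesteps this entirely: the $\Phi^2$ barrier produces $\ho\Phi^2\le C-2|\nabla\Phi|^2$, whose $-2B|\nabla\Phi|^2$ directly absorbs the residual $\tfrac{2}{Ac_0}|\nabla\Phi|^2$ from Cauchy--Schwarz, so you may drop the Hessian terms and never need the critical-point manipulation or Kato. Your version is more elementary and slightly more robust (no use of $\nabla G=0$), at the modest cost of needing the two-sided bound $|\Phi|\le C$ (which the paper also uses, to set $B=\sup|\Phi|+2$). One small point of hygiene: to make the spacetime maximum argument airtight you should first run it on $X\times[0,T']$ for each $T'<T$ and note that the resulting constant is independent of $T'$; your phrasing ``$Q$ attains its maximum at some $(x_0,t_0)$'' implicitly assumes this, but it is standard.
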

\begin{proof}
Taking $\frac{\partial }{\partial t}$ on both sides of \eqref{eqn:MA}, we get
$$\frac{\partial}{\partial t} \dot\varphi = \Delta \dot\varphi + \frac 1 T \tr_\omega (\hat \omega_Y - \omega_0  )  =  \Delta \dot \varphi + \frac{1}{T-t}\tr_\omega \hat\omega_Y + \frac{1}{T-t}\Delta \varphi,   $$
where we used  the equation $ - \frac 1 T \tr_\omega \omega_0 = - \frac{n}{T-t} + \frac{t}{T(T-t)} \tr_\omega \hat \omega_Y + \frac{1}{T-t} \Delta \varphi  $.  Then we have the equation 
\begin{equation}\label{eqn:my 5}
(\frac{\partial}{\partial t} - \Delta )\Phi = \tr_\omega \hat \omega_Y - n\ge -n.
\end{equation}
By maximum principle, it follows that $\inf_X \Phi\ge - C$ for some constant depending also on $T$. Recall $\Phi$ is also bounded above by Lemma \ref{lemma 1.1}.  
And combining  \eqref {eqn:my 5} with Bochner formula the following equation holds:
\begin{equation*}
(\frac{\partial}{\partial t} - \Delta )|\nabla \Phi|^2_{\omega} = - |\nabla \nabla \Phi|^2 - |\nabla\bar \nabla \Phi| ^2 + 2 Re \innpro{ \nabla \Phi, \bar\nabla \tr_\omega \hat \omega_Y}. 
\end{equation*}
Fix a constant $B: = \sup_{X\times [0,T)}|\Phi| + 2$. By direct calculations the following equation holds
{\small
\begin{equation}\label{eqn:my 6}\begin{split}
(\frac{\partial}{\partial t} - \Delta ) \frac{|\nabla \Phi|^2}{ B - \Phi} & = \frac{\ho |\nabla \Phi|^2}{B - \Phi} + \frac{\abs{\nabla \Phi} \ho \Phi   }{(B-\Phi)^2} + 2 Re \innpro{ \nabla \log(B-\Phi), \bar \nabla \frac{\abs{\nabla \Phi}}{B-\Phi}   }\\
& = \frac{ - \abs{\nabla\nabla \Phi}  - \abs{\nabla \bar \nabla \Phi} + 2 Re \innpro{ \nabla \Phi, \bar \nabla  \tr_\omega \hat \omega_Y  }}{B-\Phi} + \frac{\abs{\nabla \Phi} ( \tr_\omega \hat \omega_Y - n  )}{(B-\Phi)^2}\\
& \quad  + 2 Re \innpro{ \nabla \log (B-\Phi), \bar \nabla \frac{\abs{\nabla \Phi}}{B- \Phi}  }.
\end{split}
\end{equation}
}
From the equation \eqref{eqn:my 1}, we have
\begin{equation}\label{eqn:my 7} \begin{split}
& \ho \frac{\tr_\omega \hat \omega_Y}{ B - \Phi}\\ \le~ & \frac{-c_0 |\nabla \tr_\omega \hat \omega_Y|^2 + C}{ B-\Phi} + \frac{\tr_\omega \hat \omega_Y (\tr_\omega \hat\omega_Y - n)}{(B-\Phi)^2} + 2 Re \innpro{ \nabla \log (B-\Phi), \bar \nabla \frac{\tr_\omega \hat\omega_Y}{ B - \Phi}  }.
\end{split}\end{equation}
Denote $$G = \frac{\abs{\nabla \Phi}}{ B - \Phi} + A \frac{\tr_\omega \hat \omega_Y}{ B - \Phi},\quad \text{where } A = 10 c_0^{-1}.$$
By \eqref{eqn:my 6}, \eqref{eqn:my 7} and Cauchy-Schwarz inequality we have 
\begin{align*}
& \ho G\\
\le ~ & \frac{ - \abs{\nabla \nabla \Phi} - \abs{\nabla \bar \nabla \Phi} - 9 \abs{\nabla \tr_\omega \hat \omega_Y  }    }{B-\Phi} + C G + C + 2 Re\innpro{\nabla \log(B-\Phi),\bar\nabla G}.
\end{align*}
Assuming the maximum of $G$ is attained at $(x_0,t_0)$, we may assume at this point $|\nabla \Phi|\ge A$, otherwise we are done yet. Then at this point $\ho G \ge 0$ and $\nabla G = 0$, hence we have $2 |\nabla \Phi|\cdot \nabla |\nabla \Phi|  = - G \nabla \Phi - A \nabla \tr_\omega \hat \omega_Y$. Taking norm on both side we get at $(x_0,t_0)$
\begin{equation}\label{eqn:my 8}
\frac{\abs{G \nabla \Phi + A \nabla \tr_\omega \hat\omega_Y  }}{2 \abs{\nabla \Phi}} = 2 \abs{\nabla |\nabla \Phi|}\le \abs{\nabla \nabla \Phi} + \abs{\nabla \bar \nabla \Phi},
\end{equation}
where we used the Kato's inequality in the last inequality. Therefore at $(x_0,t_0)$, we have 
{\small
\begin{align*}
0\le& (B-\Phi)^{-1} \bk{ - \frac{1}{2} G^2 + A G  \frac{|\nabla \tr_\omega \hat \omega_Y|}{|\nabla \Phi|} + \frac{A^2}{2\abs{\nabla \Phi}} \abs{\nabla \tr_\omega\hat \omega_Y} - 9 \abs{\nabla \tr_\omega \hat \omega_Y}     } + C G + C\\
\le & -\frac{G^2}{4 (B-\Phi)} + C G + C,
\end{align*}
}
so at $(x_0,t_0)$, $G \le C$. From this we get the desired bound on $|\nabla \Phi|$.
\end{proof}

An immediate consequence of the gradient bound is the uniform H\"older continuity of  $\varphi_t$ on $(X,\omega_0)$.
\begin{corr}\label{cor 1.1}
There exists a uniform constant $C>0$ such that 
\begin{equation*}
| \varphi_t(p) - \varphi_t(q)  |\le C d_{\omega_0}(p,q)^\delta,\quad \forall p,q\in X, \text{ and }\forall t\in [0,T).
\end{equation*}
\end{corr}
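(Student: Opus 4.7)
The plan is to assemble the Hölder bound for $\varphi_t$ from the two ingredients already in hand: the metric Lipschitz bound on $\Phi$ coming from Lemma \ref{lemma 1.7}, and the comparison $d_{\omega_t}\le C d_{\omega_0}^\delta$ from Lemma \ref{lemma 1.6}. These two immediately combine to give, for all $p,q\in X$ and $t\in[0,T)$,
\begin{equation*}
|\Phi(p,t)-\Phi(q,t)|\le C\, d_{\omega_t}(p,q)\le C\, d_{\omega_0}(p,q)^\delta.
\end{equation*}

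The remaining task is to pass from a pointwise Hölder bound on $\Phi=(T-t)\dot\varphi+\varphi$ to the same bound on $\varphi$ itself. The difficulty is that $\Phi$ mixes $\varphi$ with its time derivative, so one cannot simply read off the bound. The key observation, in the spirit of \cite{FGS}, is to choose the right integrating factor: set $u(x,t):=\varphi(x,t)/(T-t)$, and compute
\begin{equation*}
\frac{\partial u}{\partial t}=\frac{\dot\varphi(T-t)+\varphi}{(T-t)^2}=\frac{\Phi(x,t)}{(T-t)^2}.
\end{equation*}
Since $\varphi(\cdot,0)=0$ gives $u(\cdot,0)=0$, integrating in time yields the clean formula
\begin{equation*}
\varphi(x,t)=(T-t)\int_0^t \frac{\Phi(x,s)}{(T-s)^2}\,ds.
\end{equation*}

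Subtracting this identity at $p$ and at $q$ and inserting the Hölder bound on $\Phi$ gives
\begin{equation*}
|\varphi(p,t)-\varphi(q,t)|\le C\, d_{\omega_0}(p,q)^\delta\,(T-t)\int_0^t\frac{ds}{(T-s)^2}= C\, d_{\omega_0}(p,q)^\delta\cdot\frac{t}{T},
\end{equation*}
which is bounded by $C\,d_{\omega_0}(p,q)^\delta$ uniformly in $t\in[0,T)$, as required.

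The main obstacle is really conceptual rather than technical: identifying $(T-t)^{-1}\varphi$ as the right quantity whose time derivative is exactly $\Phi/(T-t)^2$, so that the Hölder control of $\Phi$ transfers to $\varphi$ without any loss in the exponent and without generating a singular contribution as $t\to T^-$. Once this is recognised, the remainder of the argument is a routine one-line integration. Note also that the statement for arbitrary $t$ implies the corresponding Hölder regularity of the limit $\varphi_T$ by passing $t\to T^-$, which completes the Hölder continuity part of item (1) in Theorem \ref{thm:main}.
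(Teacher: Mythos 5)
Your proposal is correct and follows essentially the same route as the paper: both combine the gradient bound on $\Phi$ (Lemma \ref{lemma 1.7}) with the distance comparison $d_{\omega_t}\le C d_{\omega_0}^\delta$ (Lemma \ref{lemma 1.6}), then exploit the identity $\Phi=(T-t)^2\,\partial_t\bigl(\varphi/(T-t)\bigr)$ and integrate in time using $\varphi_0=0$. The paper integrates the difference of $\partial_t(\varphi/(T-t))$ at $p$ and $q$ and cancels the $(T-t_1)^{-1}$ factor at the end, while you write out the explicit integral representation of $\varphi$ first; these are the same computation in two dressings.
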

\begin{proof}
Recall the definition of $\Phi$, that
$$\Phi_t = (T-t) \dot\varphi + \varphi = (T- t)^2 \frac{\partial}{\partial t} \bk{ \frac{\varphi_t}{T-t}  }.$$ By the gradient bound in Lemma \ref{lemma 1.7} and distance estimate in Lemma \ref{lemma 1.6}, for any fixed points $p, q\in X$, we have
\begin{equation*}
| \Phi_t(p) - \Phi_t (q) |\le C d_{\omega_t}(p,q)\le C d_{\omega_0}(p,q)^\delta.
\end{equation*}
So
\begin{equation}\label{eqn:my 10}
\Big| \frac{\partial}{\partial t}\bk{ \frac{\varphi_t}{T-t}  }(p) - \frac{\partial}{\partial t}\bk{ \frac{\varphi_t}{T-t}  }(q)   \Big| \le \frac{C d_{\omega_0}(p,q)^\delta}{(T-t)^2},
\end{equation}
integrating \eqref{eqn:my 10} over $t\in [0,t_1)$ we get by noting that $\varphi_0 \equiv 0$ that
\begin{equation*}
\big|\frac{\varphi_{t_1}(p)}{T - t_1}  - \frac{\varphi_{t_1}(q)}{T- t_1}  \big|\le C d_{\omega_0}(p,q)^\delta \int_0^{t_1} \frac{1}{(T-t)^2}dt= C d_{\omega_0}(p,q)^\delta \frac{t_1}{T(T-t_1)},
\end{equation*}
cancelling the common factor $\frac{1}{T-t_1}$ on both sides we get the desired estimate since $t_1\in (0,T)$ is arbitrarily chosen.

\end{proof}
\begin{remark}
By an argument in \cite{Li}, the H\"older continuity of $\varphi_t$ implies that the distance functions satisfy the estimate in Lemma \ref{lemma 1.6}.
\end{remark}

Recall that the exceptional divisor $E$ is a $\mathbb{CP}^{k-1}$-bundle over $N$ and we identify $N$ with the zero section of this  bundle. Denote the bundle map by $\hat \pi: E \to N$ which is the restriction of $\pi: X\to Y$ to $E$.  From Corollary  \ref{cor 1.1}, we see that the limit $\varphi_T\in PSH(X,\hat\omega_Y)$ is also H\"older continuous in $(X,\omega_0)$.  Since $\hat\omega_Y |_{\hat\pi^{-1}(y)} = 0$ for any $y\in N$, we know that $\varphi_T|_{\hat\pi^{-1}(y)} = \text{const}$ for each $y\in N$ since $\hat \pi^{-1}(y)$ is connected. Thus $\varphi_T$ descends to a bounded function in $PSH(Y,\omega_Y)$, which we will still denote by $\varphi_T$. We shall show $\varphi_T$ is also H\"older continuous in $(Y,\omega_Y)$ with a possible different H\"older component.

\begin{lemma}\label{lemma 1.8}
There exists a uniform constant $C>0$ such that 
\begin{equation}\label{eqn:my 11}|\varphi_T(p) - \varphi_T(q)  |\le C d_{\omega_Y}(p,q)^{\delta_Y},\quad \forall ~ p,q\in Y,\end{equation}
where $\delta_Y = \min\{\delta (1-\delta),\delta^2\}\in  (0,1)$.
\end{lemma}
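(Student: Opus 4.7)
The plan is to use the H\"older estimate of $\varphi_T$ on $(X,\omega_0)$ from Corollary~\ref{cor 1.1} together with a careful analysis of the contraction $\pi:X\to Y$, obtaining lifts of pairs in $Y$ to $X$ through the radial structure of the blowup. Since $\pi$ is a biholomorphism with uniformly bounded derivatives in both directions outside any fixed neighborhood of $N$, the bound with H\"older exponent $\delta\ge\delta_Y$ is immediate for pairs $p,q$ that stay away from $N$, so it suffices to work in a tubular neighborhood of $N$ and in the local blowup charts of Lemma~\ref{lemma 1.3}. As a preliminary I will first show that $\bar\varphi_T|_N$ is $\delta$-H\"older on $(N,\omega_Y)$: since $\varphi_T$ is constant on each fiber $\hat\pi^{-1}(y)$, any two points $x,y\in N$ can be lifted via a local smooth section of $\hat\pi:E\to N$ and connected by a path in $E$ of $\omega_0$-length $\le C d_{\omega_Y}(x,y)$, after which Corollary~\ref{cor 1.1} gives the claim.

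For the main step, fix the threshold $\rho=d^\delta$ with $d=d_{\omega_Y}(p,q)$, and write $r_p=d_{\omega_Y}(p,N)$, $r_q=d_{\omega_Y}(q,N)$. In Case A ($\min(r_p,r_q)\ge\rho$), for $d$ small the Euclidean segment from $p$ to $q$ stays at $\omega_Y$-distance $\ge\rho/2$ from $N$; using the local expressions \eqref{eqn:pullback}--\eqref{eqn:pullback 1} of $\pi^*\omega_Y$ and $\ddbar\log\sigma_X$, the inverse Jacobian of $\pi$ is of size $\sigma_X^{-1/2}\le C\rho^{-1}$ along the segment, so the lifted curve has $\omega_0$-length $\le Cd/\rho$; Corollary~\ref{cor 1.1} then yields $|\bar\varphi_T(p)-\bar\varphi_T(q)|\le C(d/\rho)^\delta = Cd^{(1-\delta)\delta}$. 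In Case B ($\min(r_p,r_q)<\rho$, hence $\max(r_p,r_q)\le 2\rho$ for $d$ small), take nearest-point projections $p_N,q_N\in N$; the lift of the radial segment from $p$ down to $p_N$ (in blowup coordinates, moving $z_1$ to $0$ while holding the $\mathbb{CP}^{k-1}$-fiber coordinates fixed) has bounded-norm tangent in $\omega_0$ and thus $\omega_0$-length $\le Cr_p\le C\rho$, and similarly for $q$. Combining with the preliminary H\"older bound on $\bar\varphi_T|_N$ and $d_{\omega_Y}(p_N,q_N)\le r_p+d+r_q\le 5\rho$ gives $|\bar\varphi_T(p)-\bar\varphi_T(q)|\le Cr_p^\delta + Cd_{\omega_Y}(p_N,q_N)^\delta + Cr_q^\delta\le C\rho^\delta = Cd^{\delta^2}$. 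The two case bounds together give H\"older exponent $\delta_Y=\min\{\delta(1-\delta),\delta^2\}$ as stated.

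The main technical point is the length estimate in Case A: verifying that the inverse Jacobian of $\pi$ along a curve staying $\omega_Y$-distance $\ge\rho/2$ from $N$ has size $\le C\rho^{-1}$. This relies on the explicit local form of $\omega_0$ from the proof of Lemma~\ref{lemma 1.3}, in particular that the Fubini-Study term in the $\mathbb{CP}^{k-1}$-fiber direction picks up a factor $|z_1|^{-1}$ when expressed in terms of $Y$-displacement (equivalently, \eqref{eqn:my 3} controls $\omega_X$ by $\hat\omega_Y/\sigma_X$). Once that is in hand, the radial-lift bound in Case B and the preliminary H\"older estimate on $N$ are routine applications of Corollary~\ref{cor 1.1}.
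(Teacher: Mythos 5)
Your proposal is correct and follows essentially the same route as the paper: a preliminary H\"older bound for $\bar\varphi_T$ restricted to $N$ obtained by lifting through a section of $E\to N$, then a two-case analysis (away from $N$ via the lifted Euclidean segment and the estimate $\omega_X\le C\hat\omega_Y/\sigma_X$ from \eqref{eqn:my 3}, near $N$ via orthogonal projections and the radial lift), all fed through Corollary~\ref{cor 1.1}. The only cosmetic difference is the threshold: you use $\rho=d^{\delta}$ (so the segment case yields $d^{\delta(1-\delta)}$ and the projection case $d^{\delta^2}$), while the paper uses $2d^{1-\delta}$ (with the two exponents assigned to the opposite cases); both give $\delta_Y=\min\{\delta(1-\delta),\delta^2\}$.
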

\begin{proof}
We denote the zero section of the $\mathbb{CP}^{k-1}$-bundle $\hat \pi: E\to N$ by $\hat N$, and it is well-known that $\hat N \cong N$. It suffices to show \eqref{eqn:my 11} for $p,q$ in a fixed tubular neighborhood $T(N)$ of $N$, since on $Y\backslash T(N)$ the metric $\pi^*\omega_Y = \hat \omega_Y$ is equivalent to $\omega_0$, and the estimate follows from Corollary \ref{cor 1.1}. 

Choose coordinates charts $(V_\alpha, w_{\alpha, j})$ covering  $T(N)$ such that $V_\alpha \cap N = \{ w_{\alpha,1} = \cdots = w_{\alpha, k} = 0  \}$. We also assume that any $p,q\in T(N)$ with $d_{\omega_Y}(p,q)\le 1$ lie in the same $V_\alpha$, if the charts are chosen sufficiently fine.  We will work in a fixed chart $(V, w_i)$, and omit the subscript $\alpha$. On this open set $\omega_Y$ is equivalent to the Euclidean metric $\omega_{\mathbb C^n}$ in $(\mathbb C^n, w_i)$, so without loss of generality, we may assume $\omega_Y = \omega_{\mathbb C^n}$ on $V$. The map $\pi: U\to V$ can be locally expressed as \begin{equation}\label{eqn:map pi}w_1 = z_1,\, w_2  = z_1 z_2,\cdots, w_k = z_1 z_k,\, w_{k+1} = z_{k+1},\cdots, w_n = z_n,\end{equation}
where $(U,z_i)$ is an open chart on $X$. The zero section $\hat N$ can be locally expressed as $\hat N\cap U = \{z_1=\cdots = z_k = 0\}$.

We consider different cases depending on the positions of $p,q$ in $V$. Denote $0<d= d_{\omega_Y}(p,q) \le 1/4$.

\medskip

\noindent $\bullet$ {\bf Case 1:} we assume $p,q\in N$.  Take the unique pre-images under $\hat p$ of $p,q$ in $\hat N$, $\hat p, \hat q$, respectively. We know that $\varphi_T(p) = \varphi_T(\hat p)$ and $\varphi_T(q) = \varphi_T(\hat q)$. The line segment $\overline{pq}$ is contained in $N$ and similarly $\overline{\hat p \hat q}$ is contained in $\hat N$ as well. From the local expressions \eqref{eqn:pullback} and \eqref{eqn:pullback 1} of $\omega_X :=\pi^*\omega_Y + \varepsilon_0 \ddbar \log \sigma_X$, we conclude that $d_{\omega_Y}(p,q) = L_{\omega_Y}(\overline{pq})$ is comparable to $L_{\omega_X} (\overline{\hat p\hat q})$, which is no less than $c_1 d_{\omega_0}(\hat p,\hat q)$, for some uniform $c_1>0$. Therefore
\begin{align*}
| \varphi_T(p) - \varphi_T(q)  | = & |\varphi_T(\hat p) - \varphi_T(\hat q)| \le C d_{\omega_0}(\hat p,\hat q)^\delta\le C d_{\omega_Y}(p,q)^\delta,
\end{align*}
as desired.

\medskip

\noindent $\bullet$ {\bf Case 2:} if $0<\min\{d_{\omega_Y}(p,N), d_{\omega_Y} (q,N)   \}\le 2 d^{1-\delta}$. Take the orthogonal projections of $p$ and $q$ to $N$, $p', q'$ respectively. In other words, $p'$ ($q'$ resp.) has the same $(w_{k+1},\ldots, w_n)$-coordinates as $p$ ($q$ resp.) but the first $k$-coordinates are zero. From the assumption we know that $d_{\omega_Y}(p, p') = L_{\omega_Y}(\overline{pp'})\le 3d^{1-\delta}$ and $d_{\omega_Y}(q, q') = L_{\omega_Y}(\overline{q q'})\le 3d^{1-\delta}$. The pulled-back of the line segment $\overline{pp'}$ under $\pi$ is also a line segment $\overline{\pi^{-1}({p}) \hat p'}$ in $(U,z_i)$ connecting $\pi^{-1}(p)$ and a unique point $\hat p'\in \hat \pi ^{-1}(p')\subset E$, and $\hat p' = (0,\frac{w_2}{w_1},\ldots, \frac{w_k}{w_1},w_{k+1},\ldots, w_n)$, where $w_j$ denotes the $w_j$-coordinate at $p$. It holds that $\varphi_T(p') = \varphi_T(\hat p')$ since $\hat p'$ lies at the fiber over $p'$. Again from the local expressions  \eqref{eqn:pullback} and \eqref{eqn:pullback 1} of $\omega_X$, it follows that $L_{\omega_X} (\overline{\pi^{-1}(p) \hat p'})$ is comparable to the length of $\overline{p p'}$ under $\omega_Y$, therefore
$$d_{\omega_0}(\pi^{-1}(q), \hat p')\le C L_{\omega_X} ( \overline{ \pi^{-1}(p) \hat p'  }  )\le C L_{\omega_Y}(\overline{p p'})\le C d^{1-\delta},$$
from which we derive that
\begin{equation*}
|\varphi_T(p) - \varphi_T(p') |  = |\varphi_T(\pi^{-1}(p)) - \varphi_T(\hat p ')  |\le C d_{\omega_0}(\pi^{-1}(p), \hat p '  )^\delta \le C d^{\delta_Y}.
\end{equation*}
Similar estimate also holds for $|\varphi_T(\pi^{-1}(q)) - \varphi_T(q')  |$.  Since $p',q'\in E$ and $d_{\omega_Y}(p',q')\le d$, by {\bf Case 1} we also have $|\varphi_T(p') - \varphi_T(q')|\le C d^\delta$. The desired estimate \eqref{eqn:my 11} in this case then follows from triangle inequality.

\medskip

\noindent$\bullet$ {\bf Case 3:} $\min\{d_{\omega_Y} (p,N), d_{\omega_Y}(q, N)\}> 2d^{1-\delta}$.  The line segment $\gamma(s) = \overline{pq}$ is strictly away from $N$, in fact, $\sigma_Y (\gamma(s) ) \ge d^{2(1-\delta)}$. Therefore the pulled-back $\hat \gamma(s) = \pi^{-1}(\gamma(s))$ joins $\pi^{-1}(p)$ to $\pi^{-1}(q)$ and $\sigma_X ( \hat\gamma(s)  ) \ge d^{2(1-\delta)}$.  From \eqref{eqn:my 3} that $\omega_X \le C \frac{\pi^*\omega_Y}{ \sigma_X}$ on $X\backslash E$ we have 
$$d_{\omega_0}(\pi^{-1}(p), \pi^{-1}(q)) \le C L_{\omega_X} (\hat\gamma) \le \frac{C}{d^{1-\delta}} L_{\omega_Y}(\gamma) \le C d ^{\delta}.$$
Therefore
{\small
\begin{equation*}
| \varphi_T(p) - \varphi_T(q)  |  = | \varphi_T(\pi^{-1}(p)) - \varphi_T(\pi^{-1}(q))  |\le C d_{\omega_0}\big( \pi^{-1}(p), \pi^{-1}(q)   \big)^\delta\le C d^{\delta^2}\le C d^{\delta_Y},
\end{equation*}
}as desired.

Combining the cases discussed above, we finish the proof of the lemma.

\end{proof}

The positive $(1,1)$-form $\omega_T = \omega_Y  + \ddbar \varphi_T$ defines a K\"ahler metric $g_T$ on $Y\backslash N$, with the associated function $\tilde d_T: Y\backslash N \times Y\backslash N \to [0,\infty)$ defined by 
\begin{equation*}
\tilde d_T(p,q): = \inf \Big\{ \int_{\gamma\backslash N} \sqrt{g_T( \gamma',\gamma'  )} |~ \gamma\subset Y \text{ and $\gamma$ joins $p$ to $q$}    \Big\}
\end{equation*}
for any $p,q\in Y\backslash N$ and $\gamma$ is taken over all piecewise smooth curves in $Y$ with only finitely many intersections with $N$. With this distance function, $(Y\backslash N, \tilde d_T)$ becomes a metric space, which may not be complete. We want to extend the distance function to the whole $Y$. To begin with, we need a trick from \cite{Li}. 
\begin{lemma}\label{lemma 1.9}
There exists a uniform constant $C>0$ such that for any $p\in Y\backslash N$ and $r_p =  d_{\omega_Y}(p, N)>0$
\begin{equation*}
\tilde d_T(p,q)\le C d_{\omega_Y}(p,q)^{\delta_Y/2},\quad \forall q\in B_{\omega_Y}(p,r_p/2).
\end{equation*}

\end{lemma}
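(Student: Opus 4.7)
The plan is to bound $\tilde d_T(p,q)$ by the $\omega_T$-length of a carefully chosen broken path from $p$ to $q$, combining the H\"older regularity of $\varphi_T$ from Lemma \ref{lemma 1.8} with the pointwise metric bound from Lemma \ref{lemma 1.5}. Set $d=d_{\omega_Y}(p,q)\le r_p/2$, and work in a coordinate chart $(V,w_i)$ around $p$ in which $\omega_Y$ is comparable to the Euclidean form $\omega_{\mathbb C^n}$ and which contains $B_{\omega_Y}(p,r_p/2)$. Writing $\omega_Y=\ddbar\psi$ locally, the function $u:=\psi+\varphi_T$ is plurisubharmonic on $V\setminus N$ with $\omega_T=\ddbar u$, and is $C^{\delta_Y}$-H\"older on $V$ by Lemma \ref{lemma 1.8} and smoothness of $\psi$. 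For each shift $y\in B(0,r)\subset\mathbb C^n$ with $r=d/2$, I will consider the piecewise linear path $\gamma_y:\ p\to p+y\to q+y\to q$. A triangle inequality shows $d_{\omega_Y}(x,N)\ge r_p/4$ for every $x\in\gamma_y$, so each $\gamma_y$ lies in a compact subset of $Y\setminus N$ where $\omega_T$ is smooth.

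For the middle segment, Cauchy-Schwarz and the pointwise trace inequality give
\[
L_{\omega_T}\bigl([p+y,q+y]\bigr)^2\le C d^2\int_0^1 \tr_{\omega_{\mathbb C^n}}\omega_T\bigl(p+y+t(q-p)\bigr)\,dt.
\]
Averaging over $y\in B_r$ and using Fubini reduces the right-hand side to $Cd^2$ times $\int_{T_r}\Delta u\,dV$, where $T_r$ is the Euclidean tube of thickness $r$ about $[p,q]$, contained in $B_{2d}(p)$. The key auxiliary estimate I will prove is
\[
\int_{B_\rho(x_0)}\Delta u\,dV\le C\rho^{2n-2+\delta_Y},\qquad \rho\le r_p/8,
\]
for any $x_0$ on $[p,q]$, by integrating by parts against a standard cutoff $\chi$ supported in $B_{2\rho}(x_0)$: since $\int \Delta\chi\,dV=0$ one replaces $u$ by $u-u(x_0)$ and applies the H\"older oscillation bound from Lemma \ref{lemma 1.8}. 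Combining gives $|B_r|^{-1}\int_{B_r}L_{\omega_T}\bigl([p+y,q+y]\bigr)^2\,dy\le C d^{\delta_Y}$, so there is $y^\ast\in B_r$ with $L_{\omega_T}\bigl([p+y^\ast,q+y^\ast]\bigr)\le C d^{\delta_Y/2}$.

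For the two short end-segments $[p,p+y^\ast]$ and $[q+y^\ast,q]$, each of Euclidean length at most $d/2$, I will invoke the pointwise bound $\omega_T\le C r_p^{-\alpha}\omega_Y$ on $B_{\omega_Y}(p,r_p/2)$ obtained by pushing the estimate $\omega\le C\omega_0/\sigma_X^{1-\delta}$ of Lemma \ref{lemma 1.5} forward through $\pi$, where $\alpha=2(1-\delta)$ depends only on $n$ and $\delta$. This gives
\[
L_{\omega_T}\bigl([p,p+y^\ast]\bigr)+L_{\omega_T}\bigl([q+y^\ast,q]\bigr)\le C r_p^{-\alpha/2}\,d,
\]
and the hypothesis $d\le r_p/2$, together with the inequality $\delta_Y\le 2\delta$ built into the definition of $\delta_Y$, allows me to bound $d^{1-\delta_Y/2}r_p^{-\alpha/2}$ by a uniform constant and absorb this into $Cd^{\delta_Y/2}$. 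The case when $d$ is bounded below reduces to the uniform equivalence of $\omega_T$ and $\omega_Y$ on compact subsets of $Y\setminus N$ from Lemma \ref{lemma 1.2}.

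The main obstacle is exactly this uniformity step: the middle-segment estimate is scale-invariant but loses a factor of $d^{\delta_Y/2}$, while the pointwise short-segment bound is sharp but degenerates as $r_p\to 0$. The assumption $q\in B_{\omega_Y}(p,r_p/2)$ is precisely what makes the two ingredients compatible and produces a constant $C$ independent of $p$.
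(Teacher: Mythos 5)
Your averaging argument for the middle segment is sound (and is essentially the same $L^2$ estimate the paper obtains for $\int_{B_r(p)}\tr_{\omega_Y}\omega_T$, via the identical integration-by-parts against the H\"older oscillation of $\varphi_T$). The gap is in the end segments, and it is fatal as written.

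You claim that pushing $\omega\le C\omega_0/\sigma_X^{1-\delta}$ from Lemma \ref{lemma 1.5} through $\pi$ gives $\omega_T\le C\,r_p^{-2(1-\delta)}\omega_Y$ on $B_{\omega_Y}(p,r_p/2)$. This silently identifies $\omega_0$ with $\omega_Y$ under the pushforward, which is false: $\omega_0$ is a fixed K\"ahler metric on the blowup $X$, while $\hat\omega_Y=\pi^*\omega_Y$ degenerates along $E$. From \eqref{eqn:my 3}, the correct comparison on $X\setminus E$ is $\omega_0\sim\omega_X\le C\hat\omega_Y/\sigma_X$, so combining with Lemma \ref{lemma 1.5} yields
\[
\omega_t\le \frac{C\omega_0}{\sigma_X^{1-\delta}}\le \frac{C\hat\omega_Y}{\sigma_X^{2-\delta}},
\]
i.e.\ $\omega_T\le C\omega_Y/\sigma_Y^{2-\delta}$ on $Y\setminus N$, so the exponent is $\alpha=2(2-\delta)$, not $2(1-\delta)$. (The extra factor is precisely the degeneration of $\hat\omega_Y$ in the $\mathbb{CP}^{k-1}$-fiber directions, visible in \eqref{eqn:pullback}.) With $\alpha=2(2-\delta)$, your end-segment bound becomes $L_{\omega_T}([p,p+y^*])\le C\,d\,r_p^{-(2-\delta)}$, and absorbing this into $Cd^{\delta_Y/2}$ would require $d^{1-\delta_Y/2}\le Cr_p^{2-\delta}$ uniformly. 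Since $d\le r_p$ only gives $d^{1-\delta_Y/2}\le r_p^{1-\delta_Y/2}$, you would need $1-\delta_Y/2\ge 2-\delta$, i.e.\ $\delta-1\ge\delta_Y/2$, which fails because $\delta<1$. So the constant blows up as $r_p\to 0$ and the lemma is not proved.

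The underlying difficulty is structural: because the two short legs are pinned at $p$ and $q$, their lengths are governed by the pointwise size of $\tr_{\omega_Y}\omega_T$ near $p$, which you cannot control uniformly in $r_p$, while the averaging trick only controls integrals of $\tr_{\omega_Y}\omega_T$ on balls. (Tent-shaped paths that pinch to $p$ at $s=0$ run into the same problem: after the change of variables the resulting $s$-integral diverges at the pinch.) The paper sidesteps this entirely by working with the Lipschitz function $d_p=\tilde d_T(p,\cdot)$: from $|\nabla d_p|_{\omega_T}\le 1$ a.e.\ and the same Laplacian estimate one gets $\int_{B_{\omega_Y}(p,r)}|\nabla d_p|_{\omega_Y}^2\le Cr^{2n-2+\delta_Y}$, and then Poincar\'e plus Campanato's lemma give the $C^{\delta_Y/2}$ bound directly, with no pointwise endpoint term to absorb. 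To salvage your route you would need some replacement for the pointwise end-segment estimate that costs only $O(d^{\delta_Y/2})$ uniformly in $r_p$; the Campanato mechanism is essentially the standard way to achieve this.
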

\begin{proof}
The ball $B:=B_{\omega_Y}(p, r_p/2)$ is strictly away from $N$ so $\omega_T$ is smooth on $B$. The function $d_p(x) = \tilde d_T(p, x)$ is Lipschtiz continuous and satisfies $|\nabla d_p|_{\omega_T}\le 1$ a.e.. For any $r\le \frac{r_p}{2}$, we have 
\begin{align*}
\int_{B_{\omega_Y}(p,r)} |\nabla d_p|_{\omega_Y}^2 \omega_Y^n \le &\int_{B_{\omega_Y}(p,r)} |\nabla d_p|_{\omega_T}^2 (\tr_{\omega_Y} \omega_T )\omega_Y^n \\
\le & \int_{B_{\omega_Y}(p,r)} ( n + \Delta_{\omega_Y} \varphi_T  ) \omega_Y^n\\
\le & C r^{2n} + \int_{B_{\omega_Y}(p, 1.5 r)} |\varphi_T(x) - \varphi_T(p)| |\Delta_{\omega_Y} \eta| \omega_Y^n\\
\le & C r^{2n} + C r^{\delta_Y + 2n - 2} \le C r^{2n - 2 + \delta_Y},
\end{align*}
where $\eta$ is a standard cut-off function supported in $B_{\omega_Y}(p, 1.5 r)$ and identically equal to $1$ on $B_{\omega_Y}(p,r)$, and it satisfies $|\Delta_{\omega_Y} \eta|\le C r^{-2}$. Then by Poincare inequality and Campanato's lemma (see Theorem 3.1 in \cite{HL}) we get
$$\tilde d_T(p,q) = d_p(q) = | d_p(q) - d_p(p)  |\le C d_{\omega_Y}(p,q)^{\delta_Y/2},$$
for any $q\in B_{\omega_Y}(p, r_p/2)$.

\end{proof}

\begin{lemma}\label{lemma 1.10}
There exist constants $C>0$ and $\delta_0\in (0,1)$ such that
\begin{equation}\label{eqn:my 12}
\tilde d_{T}(p,q)\le C d_{\omega_Y}(p,q)^{\delta_0},\quad \forall p, q\in Y\backslash N.
\end{equation}
\end{lemma}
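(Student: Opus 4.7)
The plan is to remove the local restriction in Lemma \ref{lemma 1.9} and derive the global H\"older estimate by running the same Campanato--Morrey argument on balls $B_{\omega_Y}(p,r)$ of arbitrary radius, including those that may cross the singular locus $N$.

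Fix $p\in Y\setminus N$ and set $d_p(x):=\tilde d_T(p,x)$ for $x\in Y\setminus N$. The pointwise inequality $|\nabla d_p|_{\omega_Y}^2\le |\nabla d_p|_{\omega_T}^2\cdot \tr_{\omega_Y}\omega_T \le \tr_{\omega_Y}\omega_T$, valid a.e.\ on $Y\setminus N$ since $d_p$ is $1$-Lipschitz for $\omega_T$, has no restriction on the size of the ball. Choose a smooth cutoff $\eta$ with $\chi_{B_{\omega_Y}(p,r)}\le \eta\le \chi_{B_{\omega_Y}(p,1.5r)}$ and $|\Delta_{\omega_Y}\eta|\le Cr^{-2}$. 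Integrating over $B := B_{\omega_Y}(p,r)$, and using that $N$ has $\omega_Y^n$-measure zero, gives
\[
\int_B |\nabla d_p|_{\omega_Y}^2\,\omega_Y^n \;\le\; \int_Y \eta\,\tr_{\omega_Y}\omega_T\,\omega_Y^n \;=\; n\int_Y \eta\,\omega_Y^n + \int_Y (\varphi_T-\varphi_T(p))\,\Delta_{\omega_Y}\eta\,\omega_Y^n,
\]
where the second equality is distributional integration by parts for the continuous function $\varphi_T$ combined with $\int_Y \Delta_{\omega_Y}\eta\,\omega_Y^n=0$ (Stokes on the compact $Y$). The global H\"older estimate from Lemma \ref{lemma 1.8} gives $|\varphi_T-\varphi_T(p)|\le C r^{\delta_Y}$ on $B_{\omega_Y}(p,1.5r)$, so the right-hand side is bounded by $Cr^{2n}+Cr^{\delta_Y+2n-2}\le C r^{2n-2+\delta_Y}$ for small $r$, with $C$ uniform in $p$.

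Poincar\'e's inequality then yields $\int_B |d_p-(d_p)_B|^2\,\omega_Y^n\le Cr^{2n+\delta_Y}$, and Campanato's lemma (Theorem~3.1 in \cite{HL}) concludes that $d_p$ extends continuously from $Y\setminus N$ to a H\"older continuous function on $Y$ with exponent $\delta_0:=\delta_Y/2$, with H\"older seminorm uniform in $p$. Evaluating at $q$ and using $d_p(p)=0$ delivers $\tilde d_T(p,q)\le C d_{\omega_Y}(p,q)^{\delta_0}$, which is exactly \eqref{eqn:my 12}.

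The main technical point is justifying the distributional integration by parts across the singular set $N$. This works cleanly: $\varphi_T$ is continuous on all of $Y$ by Lemma \ref{lemma 1.8}, $\eta$ is smooth, and by the definition of $\omega_T = \omega_Y + \ddbar\varphi_T$ as a positive $(1,1)$-current the classical integral $\int_Y\varphi_T\,\Delta_{\omega_Y}\eta\,\omega_Y^n$ equals the measure pairing $\int_Y\eta\,d[\Delta_{\omega_Y}\varphi_T] = \int_Y\eta\,\tr_{\omega_Y}\omega_T\,\omega_Y^n - n\int_Y\eta\,\omega_Y^n$. No additional regularity of $\varphi_T$ across $N$, nor any analytic structure of $\omega_T$ near $N$, is required.
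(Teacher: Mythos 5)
Your strategy is genuinely different from the paper's. Where the paper establishes Lemma \ref{lemma 1.10} by an explicit three-case curve construction near $N$ — projecting to $N$, lifting to the zero section $\hat N\subset E$, and estimating the $\omega_T$-length of a piecewise path using the local expressions \eqref{eqn:pullback}, \eqref{eqn:pullback 1} and Lemma \ref{lemma 1.5} together with Fatou's lemma — you propose a single global Campanato argument, treating $\ddbar\varphi_T$ as a positive current and feeding the \emph{global} H\"older bound of Lemma \ref{lemma 1.8} into the cutoff integration by parts. Conceptually this is attractive and shorter, and the integration-by-parts step itself is sound: the passage from $\int_B\big|\nabla d_p\big|_{\omega_Y}^2\,\omega_Y^n$ to $\int_Y\eta\,\tr_{\omega_Y}\omega_T\,\omega_Y^n$, and from there via the current pairing to $n\int\eta\,\omega_Y^n+\int(\varphi_T-\varphi_T(p))\Delta_{\omega_Y}\eta\,\omega_Y^n$, is correct (the possible singular part of $n\,\omega_T\wedge\omega_Y^{n-1}$ on $N$ is nonnegative, so the first inequality survives). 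The resulting exponent $\delta_0=\delta_Y/2$ agrees with the paper's $\min(\delta_Y/2,\delta)$ since $\delta_Y<\delta$.

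The gap is in the step ``Poincar\'e's inequality then yields $\ldots$ and Campanato's lemma concludes.'' Both Poincar\'e and Campanato (Theorem~3.1 in \cite{HL}) are stated for functions already in $W^{1,2}(B)$, respectively $L^2(B)$; what you actually have is $d_p\in W^{1,2}_{loc}(B\setminus N)$ with $\nabla d_p\in L^2(B)$ (extending by zero across the measure-zero set $N$). That the gradient is globally square-integrable does \emph{not} by itself hand you $d_p\in L^2(B)$, nor a distribution identity $\nabla\tilde u = V$ on all of $B$: you must rule out that $\nabla d_p$, viewed on $B$, has a nonzero distributional curl supported on $N$, or equivalently that $d_p$ develops a ``jump'' or blow-up along $N$. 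The honest fix is a removability lemma: since $V:=\nabla d_p\in L^2(B)$, $dV$ lies in $W^{-1,2}$ and is supported on $N$, which has real codimension $2k\ge 4$ when $k\ge 2$ (the blow-up is trivial for $k=1$), hence zero $W^{1,2}$-capacity; therefore $dV=0$ on $B$, a primitive $\tilde u\in W^{1,2}_{loc}(B)$ exists, and $\tilde u = d_p+\mathrm{const}$ on the connected set $B\setminus N$. Only after this point can you legitimately invoke Poincar\'e and Campanato. The alternative fix — showing a priori that $d_p$ stays bounded near $N$ by integrating the upper bound $\omega_T\le C\omega_Y/\sigma_Y^{1-\delta}$ along a radial segment, or by a dyadic chain of applications of Lemma \ref{lemma 1.9} — reintroduces precisely the kind of near-$N$ analysis you were trying to avoid. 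Either way, your closing assertion that ``no analytic structure of $\omega_T$ near $N$ is required'' is not justified as written: you still need to know that $N$ is of high enough codimension and that the Dirichlet energy of $d_p$ stays finite there before the Campanato machinery applies.
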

\begin{proof}
We use the same notation as in the proof of Lemma \ref{lemma 1.8}. It suffices to show \eqref{eqn:my 12} for $p,q\in V$ where $V$ is a fixed coordinate chart in $Y$ and recall locally the map $\pi: U\to V$ is given by \eqref{eqn:map pi}. Let $d = d_{\omega_Y}(p,q)<1/4$. 

In case $\min\{d_{\omega_Y}(p,N), d_{\omega_Y}(q,N)\}> 2d$, then $q\in B_{\omega_Y}(p, \frac 12 d_{\omega_Y}(p,N) )$. By Lemma \ref{lemma 1.9}, it follows that $\tilde d_T(p,q)\le C d^{\delta_Y/2}$. So it only remains to consider the case when the minimum above is $\le 2d$. Let  $p',q'\in N\cap V$ be the orthogonal projection (assuming $\omega_Y = \omega_{\mathbb C^n}$) of $p,q$ to $N$, respectively.  Then $\max\{d_{\omega_Y}(p, p '), d_{\omega_Y}(q,q')\} \le 3d$ and $d_{\omega_Y}(p',q')\le d$. Choose the unique pre-images $\hat p,\hat q\in \hat N\subset E$ in the zero section $\hat N$ of the bundle $\hat \pi: E\to N$, of $p', q'$, i.e. $\hat \pi(\hat p) = p'$ and $\hat \pi(\hat q) = q'$.
From the local expressions \eqref{eqn:pullback} and \eqref{eqn:pullback 1} of $\omega_X = \pi^*\omega_Y + \varepsilon_0\ddbar \log \sigma_X$, it can be shown that $d_{\omega_X}(\hat p,\hat q) \le C d_{\omega_Y} (p',q')\le C d$. As in the proof of {\bf Case 1} in Lemma \ref{lemma 1.6} with $p,q$ in that lemma replaced by $\hat p, \hat q$ here. Recall that the piecewise line segment $\gamma$ which connects $\hat p$ and $\hat q$ lies outside $E$, except the two end points. Furthermore $\gamma$ is chosen independent of $t\in [0,T)$ and we have
\begin{equation}\label{eqn:my 13}
\int_\gamma \sqrt{ g_t( \gamma',\gamma'  )  }  = L_{\omega_t}(\gamma) \le C d_{\omega_X}(\hat p,\hat q)^\delta\le C d^\delta,
\end{equation}
since $g_t \to g_T$ (locally) smoothly on $\gamma\backslash \{\hat p, \hat q\}$, letting $t\to T^-$ and applying Fatou's lemma to \eqref{eqn:my 13}, we get
\begin{equation*}
\int_\gamma\sqrt{ g_T( \gamma',\gamma' )  } \le C d^{\delta}.
\end{equation*}
Denote the image curve $\gamma_0 = \pi(\gamma)\subset Y$ which joins $p'$ to $q'$ and is contained in $Y\backslash N$ except the end points. It follows then that $L_{\omega_T}(\gamma_0)\le C d^\delta$. The line segment $\gamma_1(s) = \overline{p p'}$ is given by 
$$\gamma_1(s) = ( s w_1(p), \cdots, s w_k(p),  w_{k+1} (p),\cdots, w_n(p)  ) ,\quad s\in [0,1]$$
and its pulled-back to $X$, $\hat \gamma_1(s) = \pi^{-1}(\gamma(s))$ is locally given by 
$$\hat \gamma_1(s) = ( s w_1(p), \frac{w_2(p)}{w_1(p)}, \cdots,\frac{w_{k}(p)}{w_1(p)}, w_{k+1}(p),\ldots, w_{n}(p)   ),\quad s\in [0,1].$$
By the estimate in Lemma \ref{lemma 1.5}, it follows that
{\small
\begin{equation*}
\int_{\hat \gamma_1}\sqrt{ g_t( \hat \gamma_1',\hat \gamma_1 '  )  } \le C \int_{\hat \gamma_1} \sqrt{ \frac{g_X(\hat\gamma_1',\hat \gamma_1')}{s^{2(1-\delta)} |\mathbf{w}(p)|^{2(1-\delta)}   }   } \le C \int_{\hat \gamma_1} \frac{\sqrt{\pi^*\omega_Y( \hat \gamma_1',\hat \gamma_1 '   )}}{ s^{1-\delta} |\mathbf{w}(p)|^{1-\delta}  } \le C |\mathbf{w}(p)|^\delta\le C d^\delta,
\end{equation*}
}
where $\mathbf{w}(p) =  ( w_1(p),\cdots, w_k(p))$. By Fatou's lemma and letting $t\to T^-$, we get
\begin{equation*}
L_{\omega_T}(\gamma_1) = \int_{\hat \gamma_1} \sqrt{ g_T( \hat \gamma_1' , \hat \gamma_1' )   } \le C d^\delta.
\end{equation*} Similarly the line segment $\gamma_2 = \overline{q q'}$ also have $L_{\omega_T}(\gamma_2)\le C d^\delta$. Now we define a piecewise smooth curve $$\bar \gamma = \gamma_1 + \gamma_0 + \gamma_2,   $$
which joins $p$ to $q$ and lies entirely outside $N$, except the two points $p'$ and $q'$. And combining the estimates above we get
$$L_{\omega_T}(\bar \gamma) = L_{\omega_T}(\gamma_1) + L_{\omega_T}(\gamma_0) + L_{\omega_T}(\gamma_2)\le C d^\delta. $$
Then by definition $$\tilde d_T (p,q)\le L_{\omega_T} (\bar \gamma) \le C d^\delta = C d_{\omega_Y}(p,q)^\delta. $$

\medskip

From the discussions above, \eqref{eqn:my 12} follows for $\delta_0 = \min(\delta_Y/2, \delta)$.
\end{proof}

We now extend the distance function $\tilde d_T$ to $Y$, for any $p\in Y\backslash N$ and $q\in N$, we define the distance
\begin{equation}\label{eqn:dT}
d_T(p,q):= \lim_{i\to\infty} \tilde d_T(p, q_i),
\end{equation}
where $\{q_i\}\subset Y\backslash N$ is a sequence of points such that $d_{\omega_Y} (q,q_i)\to 0$. We need to justify $d_T$ is well-defined, i.e. the limit exists and is independent of the choice of the sequence $\{q_i\}$.
\begin{lemma}\label{lemma 1.11}
The limit in \eqref{eqn:dT} exists and for any other sequence $\{q_i'\}\subset Y\backslash N$ converging to $q$ in $(Y,\omega_Y)$, the following holds
\begin{equation*}
\lim_{i\to \infty} \tilde d_T(p,q_i) = \lim_{i\to \infty} \tilde d_T(p, q_i').
\end{equation*}

\end{lemma}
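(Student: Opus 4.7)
The plan is to deduce both existence and independence of the limit from the Hölder bound in Lemma \ref{lemma 1.10} combined with the triangle inequality for $\tilde d_T$ on $Y\setminus N$. Since $\tilde d_T$ is an honest metric on $Y\setminus N$ (it is defined as an infimum of lengths of curves joining the two points through $Y$ with only finitely many intersections with $N$), we have the triangle inequality $|\tilde d_T(p,q_i) - \tilde d_T(p,q_j)| \le \tilde d_T(q_i,q_j)$ for all $i,j$.

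First I would apply Lemma \ref{lemma 1.10} to the pair $q_i, q_j \in Y\setminus N$ to obtain
\begin{equation*}
\tilde d_T(q_i,q_j) \le C\, d_{\omega_Y}(q_i,q_j)^{\delta_0}.
\end{equation*}
Since $\{q_i\}$ converges to $q$ in $(Y,\omega_Y)$, it is Cauchy with respect to $d_{\omega_Y}$, and the Hölder estimate above forces $\{\tilde d_T(p,q_i)\}$ to be Cauchy in $\mathbb R$. Hence the limit in \eqref{eqn:dT} exists.

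For the independence statement, if $\{q_i'\}\subset Y\setminus N$ is another sequence converging to $q$ in $(Y,\omega_Y)$, then by the triangle inequality and Lemma \ref{lemma 1.10},
\begin{equation*}
|\tilde d_T(p,q_i) - \tilde d_T(p, q_i')| \le \tilde d_T(q_i, q_i') \le C\, d_{\omega_Y}(q_i, q_i')^{\delta_0}.
\end{equation*}
Since $d_{\omega_Y}(q_i, q_i') \le d_{\omega_Y}(q_i,q) + d_{\omega_Y}(q,q_i') \to 0$ as $i\to\infty$, passing to the limit yields $\lim_i \tilde d_T(p,q_i) = \lim_i \tilde d_T(p, q_i')$.

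No step here is a true obstacle; the essential content has already been packaged into the Hölder bound of Lemma \ref{lemma 1.10}. The only point worth a brief verification is that the triangle inequality is legitimately available for $\tilde d_T$ on the incomplete metric space $Y\setminus N$, which is immediate from the definition via infima of curve lengths. One could, symmetrically, extend $d_T$ to pairs $p,q\in N$ by a diagonal argument using two approximating sequences, but this is not needed for the present lemma.
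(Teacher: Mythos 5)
Your argument is exactly the paper's: apply Lemma \ref{lemma 1.10} together with the triangle inequality for $\tilde d_T$ to show $\{\tilde d_T(p,q_i)\}$ is Cauchy, then use the same bound on $\tilde d_T(q_i,q_i')$ to get independence of the approximating sequence. Correct and essentially verbatim the same proof.
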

\begin{proof}
This is in fact an immediate consequence of Lemma \ref{lemma 1.10}. Observe that
\begin{equation*}
  | \tilde d_T(p,q_i) - \tilde d_T(p,q_j)   | \le \tilde d_T(q_i, q_j) \le C d_{\omega_Y}(q_i,q_j)^{\delta_0}\to 0,\quad \text{as }i,j\to\infty.
\end{equation*}
Thus $\{\tilde d_T(p,q_i)\}_{i=1}^\infty$ is a Cauchy sequence hence it converges. On the other hand, similarly we have
$$|\tilde d_T(p,q_i) - \tilde d_T(p,q_i')  |\le C d_{\omega_Y}(q_i,q_i')^\delta\to 0,\quad \text{as }i\to \infty, $$
and it then follows that the limit is independent of the choice of $\{q_i\}$ converging to $q$.

\end{proof}
We then define the distance between points in $N$ as follows: for any $p,q\in N$
\begin{equation*}
d_T(p,q): = \lim_{i\to \infty} \tilde d_T ( p_i, q_i ),
\end{equation*}
for two sequences $Y\backslash N\supset\{p_i\}\to p$ and $Y\backslash N\supset \{q_i\}\to q$ under $d_{\omega_Y}$. It can be checked similar as Lemma \ref{lemma 1.11} that the limit exists and is independent of the choice of sequences converging to $p$ or $q$. Thus $(Y, d_T)$ defines a compact metric space, since $d_T(p,q)\le C d_{\omega_Y}(p,q)^{\delta_0}$ for any $p,q\in Y$, which follows from Lemma \ref{lemma 1.10}. 


\smallskip

We now turn to the Gromov-Hausdorff convergence of the flow.  The proof is motivated by \cite{RZ} (see also \cite{TWY, GTZ,FGS}).

\begin{lemma}
For any $t_i\to T^-$, there exists a subsequence which we still denote by $\{t_i\}$ such that as compact metric spaces $$(X,\omega_{t_i})\xrightarrow{d_{GH}} (Z,d_Z)  $$for some compact metric space $(Z,d_Z)$.

\end{lemma}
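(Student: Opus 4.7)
The plan is to apply the Arzelà--Ascoli theorem to the family of distance functions $\{d_{\omega_t}\}_{t\in[0,T)}$, viewed as continuous functions on the fixed compact metric space $(X\times X,\omega_0)$, and then pass to an isometric quotient. The two inputs that feed this argument are already established: the uniform diameter bound $\mathrm{diam}(X,\omega_t)\le C$ from Theorem~\ref{thm:main}(3), and the H\"older estimate $d_{\omega_t}(p,q)\le C\,d_{\omega_0}(p,q)^\delta$ from Lemma~\ref{lemma 1.6}.

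First I would observe that, for each $t\in[0,T)$, the distance function $d_{\omega_t}:X\times X\to[0,\infty)$ is bounded by $C$ pointwise, and for any $p,q,p',q'\in X$ the reverse triangle inequality combined with Lemma~\ref{lemma 1.6} gives
\begin{equation*}
|d_{\omega_t}(p,q)-d_{\omega_t}(p',q')|\le d_{\omega_t}(p,p')+d_{\omega_t}(q,q')\le C\bigl(d_{\omega_0}(p,p')^\delta+d_{\omega_0}(q,q')^\delta\bigr).
\end{equation*}
Hence the family $\{d_{\omega_{t_i}}\}_i$ is uniformly bounded and uniformly equicontinuous on the compact metric space $(X\times X,\omega_0)$. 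By Arzel\`a--Ascoli, we can extract a subsequence (still denoted $\{t_i\}$) such that $d_{\omega_{t_i}}\to d_\infty$ uniformly on $X\times X$, where $d_\infty:X\times X\to[0,\infty)$ is continuous. Passing to the limit in the symmetry, vanishing-on-the-diagonal, and triangle inequalities shows that $d_\infty$ is a pseudo-metric on $X$, with the same H\"older bound $d_\infty(p,q)\le C\,d_{\omega_0}(p,q)^\delta$.

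Next I would quotient out the null set of $d_\infty$: declare $p\sim q$ iff $d_\infty(p,q)=0$. This is an equivalence relation (triangle inequality), and on $Z:=X/\!\sim$ the induced function $d_Z([p],[q]):=d_\infty(p,q)$ is a well-defined metric. Compactness of $Z$ is inherited from $X$ via the continuous surjection $\pi_\infty:X\to Z$, since the latter is automatically continuous with respect to $d_{\omega_0}$ by the H\"older bound.

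Finally, to conclude Gromov--Hausdorff convergence $(X,\omega_{t_i})\xrightarrow{d_{GH}}(Z,d_Z)$, I would exhibit $\pi_\infty$ as an $\varepsilon_i$-approximation with $\varepsilon_i\to 0$. Surjectivity is immediate, and the distortion estimate
\begin{equation*}
\sup_{p,q\in X}|d_{\omega_{t_i}}(p,q)-d_Z(\pi_\infty(p),\pi_\infty(q))|=\sup_{p,q\in X}|d_{\omega_{t_i}}(p,q)-d_\infty(p,q)|\xrightarrow{i\to\infty}0
\end{equation*}
is exactly the uniform convergence produced by Arzel\`a--Ascoli. I expect the only mild subtlety to be bookkeeping for the pseudo-metric quotient, since in principle $d_\infty$ could collapse points (indeed it does collapse each fiber of $\pi:X\to Y$ over $N$, as one sees from the estimates used in Lemma~\ref{lemma 1.10}); but for the qualitative existence of the GH limit this is harmless, and no further property of $\omega_t$ is required beyond the two inputs above.
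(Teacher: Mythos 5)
Your argument is correct and takes a genuinely different route from the paper. The paper establishes a uniform bound on the cardinality of $\epsilon$-nets in $(X,\omega_{t_i})$ directly from Lemma~\ref{lemma 1.6}: an $\epsilon$-separated set for $d_{\omega_{t_i}}$ is $C^{-1/\delta}\epsilon^{1/\delta}$-separated for $d_{\omega_0}$, and the volume of $(X,\omega_0)$ bounds the number of disjoint $\omega_0$-balls of a fixed radius, giving $N_{i,\epsilon}\le N_\epsilon$ independently of $i$; Gromov's precompactness theorem then produces the subsequential limit $(Z,d_Z)$ as a black box. You instead apply Arzel\`a--Ascoli to the family $\{d_{\omega_{t_i}}\}$ on $X\times X$, using the same Hölder estimate for equicontinuity (and the resulting boundedness), pass to a uniform limit $d_\infty$, build $(Z,d_Z)$ explicitly as the quotient $X/\!\sim$, and then verify GH convergence by showing the quotient map is an $\varepsilon_i$-approximation with $\varepsilon_i=\|d_{\omega_{t_i}}-d_\infty\|_{L^\infty}\to 0$. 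Both proofs consume only Lemma~\ref{lemma 1.6} (the diameter bound you cite is itself a consequence of it, since $(X,\omega_0)$ has finite diameter). The paper's route is shorter; yours is more constructive, producing $(Z,d_Z)$ as a concrete quotient of $X$ together with a natural Hölder-continuous surjection $\pi_\infty:X\to Z$ and a definite modulus $\varepsilon_i$ for the GH distance. That extra structure would in fact mesh well with the later Lemma~\ref{lemma 1.13}, where the paper must separately reconstruct a limit map $\pi_Z:Z\to Y$ via Arzel\`a--Ascoli on the $\pi_i$. The one bookkeeping point you flag — that $d_\infty$ is only a pseudo-metric and does collapse points over $N$ — is handled correctly by the quotient, and compactness of $(Z,d_Z)$ does follow as you say, since $\pi_\infty$ is continuous from the compact $(X,d_{\omega_0})$ to the Hausdorff $(Z,d_Z)$.
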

\begin{proof}
For any $\epsilon>0$, we choose an $\epsilon$-net $\{x_j\}_{j=1}^{N_{i,\epsilon}}\subset (X,\omega_{t_i})$, in the sense that $d_{\omega_{t_i}}(x_j, x_{j'})>\epsilon$ and the open balls $\{B_{\omega_{t_i}}(x_j, 2\epsilon)\}_{j}$ cover $(X,\omega_{t_i})$. From Lemma \ref{lemma 1.6}, we have
$$\epsilon < d_{\omega_{t_o}} (x_j, x_{j'}) \le C d_{\omega_0} (x_j, x_{j'})^\delta,$$ thus under the fixed metric $d_{\omega_0}$, each pair of points $(x_j, x_{j'})$ from the $\epsilon$-net has distance at least $C^{-1/\delta} \epsilon^{1/\delta}$, thus the balls $\{B_{\omega_0}(x_j, C^{-1/\delta} \epsilon^{1/\delta}/2)\}_j $ are disjoint, so for some $c>0$
$$ N_{i,\epsilon} c \epsilon^{1/\delta^{2n}}   =  \sum_{j=1}^{N_{i,\epsilon}} c \epsilon^{1/\delta^{2n}}    \le \int_{\cup_j B_{\omega_0}(x_j, C^{-1/\delta} \epsilon^{1/\delta}/2)} \omega_0^n \le \int_X \omega_0^n,$$ from which we derive an upper bound of $N_{i,\epsilon}\le N_\epsilon$, which is independent of $i$. Then by Gromov's precompactness theorem (\cite{Gr}), there exists a compact metric space $(Z,d_Z)$, such that up to a subsequence $(X,\omega_{t_i})\xrightarrow{d_{GH}} (Z,d_Z)$.

\end{proof}

\begin{lemma}\label{lemma 1.13}
There exists an open and dense subset $Z^\circ \subset Z$ such that $(Z^\circ, d_Z)$ and $(Y\backslash N, d_T)$ are homeomorphic and locally isometric.
\end{lemma}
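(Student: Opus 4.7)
The plan is to build an injective, locally isometric map $F : (Y \setminus N, d_T) \to (Z, d_Z)$ and take $Z^\circ := F(Y \setminus N)$. First, fix a countable dense subset $\{p_k\} \subset Y \setminus N$ in the $\omega_Y$-topology, and let $\tilde p_k = \pi^{-1}(p_k) \in X \setminus E$. Let $f_i : (X, d_{\omega_{t_i}}) \to (Z, d_Z)$ be $\epsilon_i$-GH approximations with $\epsilon_i \to 0$ coming from the preceding lemma. By a diagonal argument, pass to a subsequence (still denoted $t_i$) so that $F(p_k) := \lim_i f_i(\tilde p_k)$ exists in $Z$ for every $k$.

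The main local step is to show $F$ is isometric on small $d_T$-balls. For each $p \in Y \setminus N$, choose a small ball $V_p = B_{d_{\omega_Y}}(p, r)$ whose closure lies in $Y \setminus N$; then $\pi^{-1}(\overline{V_p}) \Subset X \setminus E$ is compact, and by Lemma \ref{lemma 1.2}(ii) the metrics $\omega_{t_i}$ converge there smoothly to $\omega_T$, so the intrinsic distances $d^{V_p}_{\omega_{t_i}}$ converge uniformly to $d^{V_p}_{\omega_T}$, which agrees with $d_T$ on sufficiently close pairs. The crucial observation is that, by Lemma \ref{lemma 1.2}\ref{item lemma 1.2}, one has $\omega_{t_i} \ge c\,\hat\omega_Y = c\,\pi^*\omega_Y$, so any curve in $X$ exiting $\pi^{-1}(V_p)$ has $\omega_{t_i}$-length bounded below by a positive constant depending only on $r$, while $d^{V_p}_{\omega_{t_i}}(\tilde p_1, \tilde p_2) \to d_T(p_1, p_2)$ can be made arbitrarily small. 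Thus the true $d_{\omega_{t_i}}$-minimizing geodesic stays inside $\pi^{-1}(V_p)$, yielding $d_Z(F(p_1), F(p_2)) = d_T(p_1, p_2)$. The map $F$ then extends uniquely by continuity to a locally isometric map on all of $Y \setminus N$, and the characterization $F(p) = \lim_i f_i(\tilde p)$ persists for every $p \in Y \setminus N$ by a further diagonal argument. Injectivity is immediate from the same lower bound: $d_{\omega_{t_i}}(\tilde p, \tilde q) \ge \sqrt c\, d_{\omega_Y}(p, q) > 0$ for $p \ne q$, so $F(p) \ne F(q)$.

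It remains to show $Z^\circ = F(Y \setminus N)$ is open and dense, and that $F^{-1}$ is continuous. Openness and continuity of $F^{-1}$ follow from a lifting argument: given $z \in B_{d_Z}(F(p), r')$ with $r'$ small, choose $a_i \in X$ with $d_Z(f_i(a_i), z) \le \epsilon_i$; combined with $f_i(\tilde p) \to F(p)$ this forces $d_{\omega_{t_i}}(a_i, \tilde p) \le r' + o(1)$, and Lemma \ref{lemma 1.2}\ref{item lemma 1.2} places $a_i \in \pi^{-1}(V_p)$. Subsequentially $\pi(a_i) \to y \in \overline{V_p} \subset Y \setminus N$, and smooth convergence on $\pi^{-1}(\overline{V_p})$ identifies $z = F(y) \in Z^\circ$. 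For density, take $x_i \in X$ with $f_i(x_i) \to z$, perturb to $x_i \in X \setminus E$ (possible since $\omega_{t_i}$ is smooth on all of $X$ and $E$ has real codimension two), and extract $x_i \to x_\infty \in X$ in the $\omega_0$-topology. If $x_\infty \notin E$ one gets $z \in Z^\circ$ directly as above; if $x_\infty \in E$, choose $p_k$ in the dense set with $\tilde p_k \to x_\infty$ in $\omega_0$ and use the bound $d_{\omega_{t_i}}(\tilde p_k, x_i) \le C\, d_{\omega_0}(\tilde p_k, x_i)^\delta$ from Lemma \ref{lemma 1.6}, together with a diagonal choice $i = i(k)$, to conclude $F(p_k) \to z$, so $z \in \overline{Z^\circ}$.

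The principal obstacle is exactly this density argument when $x_\infty \in E$: the map $F$ is defined by the iterated limit $\lim_k \lim_i f_i(\tilde p_k)$, whereas the natural approximation of $z$ is the diagonal limit $f_i(x_i) \to z$, and the fibers of $\pi$ over $N$ have positive $\omega_0$-diameter, so one cannot simply push $p_k$ to approach $x_\infty$ in $d_{\omega_Y}$ alone. It is precisely the uniform H\"older control of $d_{\omega_{t_i}}$ by the fixed background metric $\omega_0$, provided by Lemma \ref{lemma 1.6}, that allows the two limits to be reconciled via a diagonal extraction.
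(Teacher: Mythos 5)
Your proof is correct, but it takes a genuinely different route from the paper's. The paper constructs a map in the opposite direction: using the uniform Lipschitz bound $\pi^*\omega_Y \le C\omega_{t_i}$ from Lemma~\ref{lemma 1.2}\ref{item lemma 1.2} it extracts, via Arzel\`a--Ascoli, a GH-limit map $\pi_Z : (Z,d_Z) \to (Y,\omega_Y)$ of the fixed projections $\pi_i = \pi$, and then sets $Z^\circ := \pi_Z^{-1}(Y\backslash N)$, whose openness is immediate from continuity. You instead build $F : Y\backslash N \to Z$ on a countable dense set by a diagonal argument over $\varepsilon_i$-GH approximations, extend by continuity, and set $Z^\circ := F(Y\backslash N)$; this makes openness of $Z^\circ$ a claim you must prove (via your lifting argument) rather than a freebie, but it makes the local isometry quite transparent, since the source $Y\backslash N$ has a fixed smooth structure on which $\omega_{t_i}\to\omega_T$ converges locally smoothly. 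The most substantive divergence is in the density argument: the paper argues by contradiction, invoking Lemma~\ref{lemma SW} (that the fibers $\hat\pi^{-1}(y)$ of $E\to N$ collapse at rate $(T-t)^{1/3}$) to replace an approximating sequence $x_i\in E$ by points in the zero section $\hat N$, and then runs an explicit coordinate computation with $\omega_X$; your argument bypasses Lemma~\ref{lemma SW} entirely by extracting $x_i\to x_\infty$ in the fixed $\omega_0$-topology and applying the H\"older estimate of Lemma~\ref{lemma 1.6} to points $\tilde p_k \to x_\infty$, which gives $d_Z(F(p_k),z)\le C\,d_{\omega_0}(\tilde p_k,x_\infty)^\delta \to 0$ without any reference to the fiber structure of $E$. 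This is a real simplification, and as you note it hinges precisely on the uniform H\"older control of $d_{\omega_t}$ by the background $d_{\omega_0}$. One small point worth making explicit: when you say $d^{V_p}_{\omega_T}$ ``agrees with $d_T$ on sufficiently close pairs,'' the justification uses $\omega_T \ge c\,\omega_Y$ on $Y\backslash N$ (the push-down of Lemma~\ref{lemma 1.2}\ref{item lemma 1.2} in the limit $t\to T^-$) to prevent near-minimizing curves from leaving $V_p$; and the injectivity bound $d_{\omega_{t_i}}(\tilde p,\tilde q)\ge \sqrt{c}\,d_{\omega_Y}(p,q)$ is the same inequality applied along curves. Both are fine, but should be stated since they carry the weight of several of your steps.
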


\begin{proof}
For notation convenience we denote $Y^\circ = Y\backslash N$. The maps $\pi_i = \pi: (X,\omega_{t_i}) \to (Y, \omega_Y)$ are Lipschitz by the estimate $\pi^*\omega_Y \le C \omega_{t_i}$ as in (ii) of Lemma \ref{lemma 1.2}. The target space $(Y,\omega_Y)$ is compact, so by Arzela-Ascoli theorem up to a subsequence of $\{t_i\}$, along the GH convergence $(X,\omega_{t_i})\xrightarrow{d_{GH}}(Z,d_Z)$, the maps $\pi_i \xrightarrow{{GH}} \pi_Z$, for some map $\pi_Z : (Z,d_Z)\to (Y,\omega_Y)$, in the sense that for any $(X,\omega_{t_i})\ni x_i\xrightarrow{d_{GH}} z\in Z$, $\pi_i(x_i)\xrightarrow{d_{\omega_Y}} \pi_Z(z)$ in $Y$. $\pi_Z$ is also Lipschitz from $(Z,d_Z)$ to $(Y,\omega_Y)$, i.e. $d_{\omega_Y}(\pi_Z(z_1),\pi_Z(z_2))\le C d_Z(z_1, z_2)$ for any $z_1,z_2\in Z$.  We denote $Z^\circ = \pi_Z^{-1}(Y^\circ)$, and we will show that $\pi_Z|_{Z^\circ}: (Z^\circ, d_Z) \to (Y^\circ, d_T)$ is homeomorphic and locally isometric, and $Z^\circ \subset Z$ is open and dense. The openness of $Z^\circ \subset Z$ follows from the continuity of the map $\pi_Z : (Z,d_Z)\to (Y,d_{\omega_Y})$ and the fact that $Y^\circ\subset Y$ is open.

\medskip

\noindent $\bullet$ {\bf $\pi_Z|_{Z^\circ}$ is injective:} suppose $z_1, z_2\in Z^\circ = \pi_Z^{-1}(Y^\circ)$ are mapped to the same point $y\in Y^\circ$, $\pi_Z(z_1) = \pi_Z(z_2) = y$. Since $(Y^\circ, \omega_T)$ is an incomplete smooth Riemannian manifold and locally in $Y^\circ$, $d_T$ is induced from the Riemannian metric, we can find a small $r = r_y>0$ such that the metric ball $(B_{\omega_T}(y, 2r), \omega_T)$ is geodesically convex. Choose two sequence of points $z_{1,i},z_{2,i}\in (X,\omega_{t_i})$ converging in GH sense to $z_1, z_2\in Z$, respectively.   From the convergence of $\pi_i\xrightarrow{GH} \pi_Z$, we obtain $d_{\omega_Y} ( \pi_i(z_{1,i}), \pi_Z(z_1)   )\xa{i\to\infty} 0$ and $d_{\omega_Y} ( \pi_i(z_{2,i}), \pi_Z(z_2)   )\xa{i\to\infty} 0$. By Lemma \ref{lemma 1.10}, the same limits hold with $d_{\omega_Y}$ replaced by $d_T$. In particular this implies that $d_T(\pi_i(z_{1,i}), \pi_{i}(z_{2,i})  )\xa{i\to\infty} 0$ and both $\pi_{i}(z_{1,i})$ and $\pi_i(z_{2,i})$ lie inside $B_{\omega_T}(y, r/2)$ when $i$ is large enough. We can find $\omega_T$-geodesics $\gamma_i\subset B_{\omega_T} (y, r)$ connecting $\pi_i(z_{1,i})$ and $\pi_i(z_{2,i})$, and by the uniform and smooth convergence of $\omega_{t_i}\to \omega_T$ on $\overline{\pi^{-1}(B_{\omega_T} (y, 2r)   )  }$, it follows that
\begin{equation*}
0\le d_{\omega_{t_i}}(z_{1,i}, z_{2,i}) \le L_{\omega_{t_i}} (\hat \gamma_i) \le L_{\omega_T}(\gamma_i) + \epsilon_i = d_{T}( \pi(z_{1,i}),\pi_i( z_{2,i})  ) + \epsilon_i \xa{i\to\infty} 0,
\end{equation*}
where $\hat \gamma_i = \pi^{-1}(\gamma_i)$ is a curve joining $z_{1,i}$ to $z_{2,i}$ and $\{\epsilon_i\}$ is a sequence tending to zero. From the definition of GH convergence we see that $$d_Z(z_1, z_2) = \lim_{i\to\infty}d_{\omega_{t_i}} (z_{1,i}, z_{2,i}) = 0.  $$
Hence $z_1 = z_2$ and $\pi_{Z}|_{Z^\circ}$ is injective. 

\medskip

\noindent$\bullet$ {\bf $\pi_{Z} |_{Z^\circ}: (Z^\circ, d_Z) \to (Y^\circ, d_T)$ is a local isometry.} We first explain what the local isometry means. It says that for any $z\in Z^\circ$ and $y = \pi_Z(z)\in Y^\circ$, we can find open sets $z\in U\subset Z^\circ$ and $y\in V\subset Y^\circ$ such that $\pi_Z|_U: (U,d_Z) \to (V,d_T)$ is an isometry.

  There exists a small $r = r_y>0$ such that the metric ball $(B_{\omega_T}(y, 3r),\omega_T) \subset Y^\circ$ and is geodesically convex. Take $U = (\pi_Z|_{Z^\circ})^{-1} (B_{\omega_T}(y, r)  )$. Since $B_{\omega_T}(y, r)$ is also open in $(Y, \omega_Y)$, it can be seen that $U$ is open in $Z^\circ$ and is a neighborhood of $z\in Z^\circ$. We will show $\pi_Z|_{U}: (U,d_Z) \to ( B_{\omega_T}(y,r),\omega_T  )$ is an isometry, i.e. for any $z_1,z_2\in U$, and $y_1 = \pi_Z(z_1)$, $y_2 = \pi_Z(z_2)$, we have $d_Z(z_1, z_2) = d_T(y_1, y_2)$.

\smallskip

We choose sequences of points $z_{1,i},z_{2,i}\in (X,\omega_{t_i})$ converging in GH sense to $z_1, z_2$, respectively, as before. It then follows from $\pi_i \xa{GH} \pi_Z$ and Lemma \ref{lemma 1.10} that $d_{T}( \pi_i(z_{a,i}), y_a  ) \to 0$ as $i\to \infty$, for each $a= 1,2$. In particular when $i$ is large enough, $\pi_i(z_{a,i})\in B_{\omega_T}(y, 1.1 r)$. Choose a minimal $\omega_{t_i}$-geodesic $\hat\gamma_i$ joining $z_{1,i}$ to $z_{2,i}$, and we have $$d_{\omega_{t_i}}(z_{1,i}, z_{2,i}) = L_{\omega_{t_i}}(\hat \gamma_i) \xa{i\to \infty } d_Z(z_1, z_2). $$ Denote the image $\gamma_i = \pi_i(\hat\gamma_i)$ which is a continuous curve joining $\pi_i(z_{1,i})$ to $\pi_i(z_{2,i})$. If $\gamma_i\subset B_{\omega_T}(y, 3r)$ (for a subsequence of $i$), since $\omega_{t_i} $  converge smoothly and uniformly to $\omega_T$ on the compact subset $\overline{\pi^{-1} ( B_{\omega_T} (y, 3r) )   }$,  it follows 
\begin{equation*}
d_{T}( \pi_i(z_{1,i}), \pi_i (z_{2,i})   ) \le L_{\omega_T} (\gamma_i) \le L_{\omega_{t_i}} (\hat \gamma_i) + \epsilon_i \xa{i\to\infty} d_Z(z_1,z_2).
\end{equation*}
In case $\gamma_i\not\subset B_{\omega_T}(y, 3 r)$ for $i$ large enough, we have
\begin{equation*}
d_T( \pi_i(z_{1,i}), \pi_i (z_{2,i})   ) \le 2.5 r \le L_{\omega_T}( \gamma_i\cap B_{\omega_T} (y, 3r)  ) \le L_{\omega_{t_i}} (\gamma_i) + \epsilon_i \xa{i\to\infty} d_Z(z_1, z_2).
\end{equation*}
Observe that $d_T( \pi_i(z_{1,i}), \pi_i (z_{2,i})   )\xa{i\to \infty} d_{T} (\pi_Z(z_1) , \pi_Z(z_2)  ) = d_T(y_1, y_2) $. So by the discussion in both cases, it follows that $d_T(y_1, y_2)\le d_Z(z_1, z_2)$. To see the reverse inequality, by the geodesic convexity of $(B_{\omega_T}(y, 3r), \omega_T  )$, we can find minimal $\omega_T$-geodesics $\sigma_i\subset B_{\omega_T}(y, 3r)$ connecting  $\pi_i(z_{1,i})$ and $\pi_i(z_{2,i})$ for $i$ large enough. The pulled-back $\hat \sigma_i = \pi^{-1}(\sigma_i)\subset \overline{B_{\omega_T}(y, 3r)  }$ joins $z_{1,i}$  to $z_{2,i}$, again by the local smooth convergence of $\omega_{t_i}$ to $\omega_T$, we have 
\begin{equation*}
d_{\omega_{t_i}}(z_{1,i}, z_{2,i}) \le L_{\omega_{t_i}} ( \hat \sigma_i )\le L_{\omega_T}( \sigma_i  ) + \epsilon_i = d_{T}(\pi_i(z_{1,i}), \pi_i(z_{2,i})  ) + \epsilon_i\xa{i\to\infty} d_{T} (y_1,y_2),
\end{equation*}
letting $i\to \infty$ we get $d_Z(z_1, z_2)\le d_T(y_1, y_2)$. Thus we show that $d_{Z}(z_1,z_2) = d_T(y_1, y_2)$, as desired.

\medskip

\noindent $\bullet$ {\bf $\pi_Z|_{Z^\circ}$ is surjective.} This follows from definition. Indeed, for any $y\in Y^\circ$, take $z = z_i = \pi^{-1}(y)\in (X,\omega_{t_i})$, up to a subsequence $z_i\xa{d_{GH}} z_0\in Z$. Since $\pi_i \xa{GH} \pi_Z$, we get $d_{\omega_Y} (y, \pi_Z(z_0)) = d_{\omega_Y} ( \pi_i(z_i),  \pi_Z(z_0) ) \to 0$ as $i\to \infty$. So $\pi_Z(z_0) = y$ and $z_0\in Z^\circ$ is the pre-image of $y$ under $\pi_Z|_{Z^\circ}$.

\medskip

Combining the discussions above, we see that $\pi_Z|_{Z^\circ}: (Z^\circ, d_Z) \to (Y^\circ, d_T)$ is a bijection and thus a homeomorphism (noting that the continuity of the maps $\pi_Z|_{Z^\circ}$ and $(\pi_Z|_{Z^\circ})^{-1}$ follow from the local isometry property). 

\medskip

It only remains to show $Z^\circ \subset Z$ is dense. Suppose not, there exists a point $z_0\in Z$ such that $B_{d_Z} (z_0,\bar \varepsilon) \subset Z\backslash Z^\circ$ for some $\bar \varepsilon>0$. Choose a sequence of points $x_{i}\in (X,\omega_{t_i})$ such that $x_{i}\xa{d_{GH}} z_0$. We claim that $d_{\omega_{t_i}}(x_{i}, E) \to 0$ as $i\to \infty$, where $E$ is the exceptional divisor of the blown-down map $\pi: X\to Y$. If not, then $d_{\omega_{t_i}}(x_{i}, E)\ge a_0>0$ for a sequence of large $i$'s, by Lemma \ref{lemma 1.6}, under the fixed metric $w_0$, $d_{\omega_0} (x_{i}, E)\ge C^{-1/\delta} a_0^{1/\delta}>0$, thus $\{x_{i}\} \subset K$, for some compact subset $K\Subset X\backslash E$. It then follows that $\pi_i(x_{i})\in \pi(K)\Subset Y^\circ$, and this contradicts the fact that $d_{\omega_Y}( \pi_i(x_{i}), \pi_Z(z_0)  )\to 0$ and $\pi_Z(z_0)\not\in Y^\circ$. Therefore, we may assume without loss of generality that $x_{i}\in E$ for all $i$. Moreover, from Lemma \ref{lemma SW} below, we may replace $x_{i}\in E$ by the point in the same fiber  as $x_{i}$ of the $\mathbb{CP}^{k-1}$-bundle  $\hat \pi: E\to N$
and the zero section $\hat N$. So we can assume in addition that $x_{i}\in \hat N$. Denote the points $y_{i} = \pi_i(x_i)\in N$ and $y_0 = \pi_Z(z_0)\in N$.  From $\pi_i\xa{GH} \pi_Z$ and $x_i\xa{d_{GH}} z_0$, we have $d_{\omega_Y}(y_i, y_0)\to 0$ as $i\to \infty$.

We may choose a coordinates chart $(V, w_j)$ as before, which is centered at $y_0$ and contains all but finitely many $y_i$, and $N\cap V = \{w_1 = \cdots = w_k = 0\}$. We take an open set $(U,z_j)$ over $(V,w_j)$, such that the map $\pi: U\to V$ is expressed as in \eqref{eqn:map pi}. We fix a point $p\in V\backslash N$ whose $w$-coordinate is $w(p) = (r, 0,\cdots, 0)$ for some $r>0$ to be determined. Take $\hat p = \pi^{-1}(p)$ and its $z$-coordinate is $z(\hat p) = (r,0,\cdots, 0)$. The point(s) $\hat p_i = \hat p\in (X,\omega_{t_i})$ converge (up to a subsequence) in GH sense to some point $p_Z\in Z$, and as above, we have $d_{\omega_Y}(p, \pi_Z(p_Z)) = d_{\omega_Y}(\pi_i(\hat p_i), \pi_Z(p_Z)) \to 0$ as $i\to\infty$,  so $p = \pi(p_Z)\in Y^\circ$ and $p_Z\in Z^\circ$. From the assumption we have $d_Z(z_0, p_Z)\ge \bar \varepsilon>0$. On the other hand, by the local expressions \eqref{eqn:pullback} and \eqref{eqn:pullback 1} of $\omega_X = \pi^*\omega_Y + \varepsilon_0 \ddbar \log \sigma_X$, we find that line segments $\overline{\hat p \hat z_0} + \overline{\hat z_0 x_i}$ in $(U,z_j)$ have $\omega_X$-length $\le C r + \epsilon_i$ for some sequence $\epsilon_i\to 0$, where we denote $\hat z_0 = \pi^{-1}(p_0)\cap \hat N$, i.e. $\hat z_0$ is the origin in $(U,z_j)$. So $d_{\omega_0}(\hat p_i, x_i)\le C (r + \epsilon_i)$ and by Lemma \ref{lemma 1.6}, $d_{\omega_{t_i}}(\hat p_i, x_i)\le C(r + \epsilon_i  )^\delta$. Letting $i\to\infty$ we get $d_Z(p_Z, z_0)\le C r^\delta$. If we choose $r$ small such that  $C r^{\delta} = \bar \varepsilon /2$, we would get a contradiction. Therefore $Z^\circ\subset Z$ is dense.

\end{proof}

By exactly the same proof of Lemma 3.2 in \cite{SW1}, we have 
\begin{lemma}\label{lemma SW}
There is a uniform constant $C>0$ such that 
\begin{equation*}
\mathrm{diam}( \hat \pi^{-1}(y), \omega_t  )\le C (T-t)^{1/3},\quad \forall t\in [0,T), \text{ and }\forall y\in N.
\end{equation*}
\end{lemma}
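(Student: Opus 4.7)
The plan is to adapt the proof of Lemma 3.2 in \cite{SW1} to the $\mathbb{CP}^{k-1}$-fibers $F_y := \hat\pi^{-1}(y)$ of the bundle $\hat\pi:E\to N$. The strategy rests on three ingredients: a cohomological computation giving the fiber volume decay, a maximum-principle bound on the restricted metric $\omega_t|_{F_y}$ via an auxiliary barrier function, and the passage from this pointwise bound to the diameter estimate.

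First I would record the elementary fact that since $\pi$ collapses the fiber $F_y$ to the point $y\in N$, the pullback $\hat\omega_Y$ restricts to zero on $F_y$. Consequently the cohomology class $[\omega_t|_{F_y}]$ shrinks linearly in $T-t$:
\begin{equation*}
[\omega_t|_{F_y}]=\tfrac{T-t}{T}[\omega_0|_{F_y}],\qquad \int_{F_y}\omega_t^{\,k-1}|_{F_y}=C_0(T-t)^{k-1},
\end{equation*}
where $C_0$ depends only on $[\omega_0]$ and $\pi$.

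The heart of the argument is an upper bound of the form $\omega_t|_{F_y}\le C(T-t)^{2/3}\omega_{F_y}$, where $\omega_{F_y}$ is a fixed Fubini--Study type reference metric on $F_y\cong\mathbb{CP}^{k-1}$. Following \cite{SW1}, I would apply the maximum principle to a barrier combining $\log\tr_\omega\hat\omega_Y$, the bounded quantity $\Phi=(T-t)\dot\varphi+\varphi$ from Lemma \ref{lemma 1.7}, and the logarithm $\log\sigma_X$ of the defining function of $E$ constructed in Lemma \ref{lemma 1.3}. The Chern--Lu inequality \eqref{eqn:my 1}, the lower bound $\omega\ge c\hat\omega_Y$ from Lemma \ref{lemma 1.2}, and the bound on $\dot\varphi$ from Lemma \ref{lemma 1.1} produce a parabolic differential inequality that closes provided the coefficients in the barrier are chosen to balance the $(T-t)$-dependence of $\hat\omega_t$ against the normal degeneration of $\omega$ near $E$. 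The sharp exponent $2/3$ arises precisely from this balancing.

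Once the pointwise estimate on $\omega_t|_{F_y}$ is in hand, for any $p,q\in F_y$ and any smooth curve $\gamma\subset F_y$ joining them one has
\begin{equation*}
d_{\omega_t}(p,q)\le L_{\omega_t}(\gamma)\le C(T-t)^{1/3}L_{\omega_{F_y}}(\gamma).
\end{equation*}
Choosing $\gamma$ to be a minimal $\omega_{F_y}$-geodesic yields $\mathrm{diam}(F_y,\omega_t)\le C(T-t)^{1/3}$, and since all constants depend only on the topology of the fiber and the uniform data of the previous lemmas, the bound is uniform in $y\in N$. The main obstacle is the careful bookkeeping in the barrier argument needed to produce the sharp exponent $2/3$ in the trace bound; the rest is a routine importation of the argument of \cite{SW1}, which is why the author defers to that reference.
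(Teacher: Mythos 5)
Both you and the paper defer the substance to Lemma~3.2 of \cite{SW1} (the paper literally says ``by exactly the same proof''), so relying on that reference is not in itself a shortcoming. The problem is that the one piece of content you add beyond the citation---the claimed intermediate bound $\omega_t|_{F_y}\le C(T-t)^{2/3}\omega_{F_y}$ as a \emph{pointwise} estimate directly on the fiber---is asserted without proof and is, I believe, not actually what \cite{SW1} establish. Every estimate available along the flow either loses information or diverges on $E$ in the $\mathbb{CP}^{k-1}$-fiber directions: the Schwarz-lemma bound $\tr_\omega\hat\omega_Y\le C$ sees only the directions transverse to the fiber (since $\hat\omega_Y$ degenerates exactly there), the trace bound of Lemma~\ref{lemma 1.5} blows up as $\sigma_X\to0$, and $\omega^n\le C\Omega$ controls only a product of eigenvalues. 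The barrier you name---$\log\tr_\omega\hat\omega_Y$, $\Phi$, $\log\sigma_X$---is therefore not obviously suited to produce an \emph{upper} bound on $\omega$ in the fiber directions evaluated on $E$; in particular $\log\sigma_X\to-\infty$ on $E$, so a maximum-principle quantity built from it would naturally give a weighted bound \emph{off} $E$ (of the shape $\tr\,\omega\lesssim (T-t)^{a}\sigma_X^{-b}$) rather than a clean restricted trace on $E$. The Song--Weinkove argument then extracts the $(T-t)^{1/3}$ diameter bound from such a singular off-$E$ estimate by the radial-plus-tangential path construction already present in Case~1 of Lemma~\ref{lemma 1.6} (go off the fiber to a fixed small radius, traverse the collapsing direction there, come back), not by restricting a smooth metric bound to $F_y$.

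Concretely, the gap is your second step: the sentence ``produce a parabolic differential inequality that closes provided the coefficients in the barrier are chosen to balance the $(T-t)$-dependence $\ldots$'' is a placeholder rather than an argument, and you concede as much in your final sentence. The exponent $2/3$ has to be \emph{derived}, not announced, and a reader cannot check that the proposed barrier closes without seeing the computation. Your step~1 (the cohomological volume decay $\int_{F_y}\omega_t^{k-1}|_{F_y}=C_0(T-t)^{k-1}$) is correct but never feeds into the rest of your sketch, so it is a red herring. If you want this lemma to stand on its own rather than as a citation, you need to (i) state and prove the correct weighted maximum-principle estimate near $E$, with the dependence on both $T-t$ and $\sigma_X$ made explicit, and then (ii) run the tube argument of Lemma~\ref{lemma 1.6} Case~1 inside a single fiber $F_y$ to convert it into the $(T-t)^{1/3}$ diameter bound.
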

That is to say, the diameters of the fibers of $\hat\pi: E\to N$ degenerate at a uniform rate as $O((T-t)^{1/3})$.

\begin{lemma}
The map $\pi_Z : (Z,d_Z) \to (Y,d_T)$ is a homeomorphism.

\end{lemma}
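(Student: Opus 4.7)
The plan is to verify that $\pi_Z$ is a continuous bijection from $(Z,d_Z)$ to $(Y,d_T)$; then, since $(Z,d_Z)$ is compact and $(Y,d_T)$ is Hausdorff, it will automatically be a homeomorphism. Continuity of $\pi_Z$ is essentially free: the estimate $\pi^*\omega_Y\le C\omega_{t_i}$ from Lemma \ref{lemma 1.2}\ref{item lemma 1.2} says $\pi_i:(X,\omega_{t_i})\to(Y,\omega_Y)$ is uniformly Lipschitz, so the GH-limit $\pi_Z:(Z,d_Z)\to(Y,d_{\omega_Y})$ is Lipschitz. Lemma \ref{lemma 1.10} (together with its extension to all of $Y$) gives $d_T\le Cd_{\omega_Y}^{\delta_0}$, and a compactness/continuous-bijection argument shows $d_T$ and $d_{\omega_Y}$ induce the same topology on $Y$, so $\pi_Z:(Z,d_Z)\to(Y,d_T)$ is continuous. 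Surjectivity will follow by combining Lemma \ref{lemma 1.13} (surjectivity onto $Y^\circ$) with the closedness of the image (from compactness of $Z$ and continuity of $\pi_Z$) and the density of $Y^\circ\subset Y$.

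The real content is injectivity. Suppose $\pi_Z(z_1)=\pi_Z(z_2)=y$. If $y\in Y^\circ$, then $z_1,z_2\in Z^\circ$ and we are done by Lemma \ref{lemma 1.13}; the new case is $y\in N$. I will choose sequences $x_{j,i}\in (X,\omega_{t_i})$ with $x_{j,i}\xrightarrow{d_{GH}} z_j$ for $j=1,2$. The first step is to show $d_{\omega_{t_i}}(x_{j,i},E)\to 0$; otherwise, by Lemma \ref{lemma 1.6} the $x_{j,i}$ would stay in a compact $K\Subset X\backslash E$, forcing $\pi_i(x_{j,i})\in\pi(K)\Subset Y^\circ$, which contradicts $\pi_i(x_{j,i})\to y\in N$. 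Hence we may replace $x_{j,i}$ by points on $E$ within distance $o(1)$ in $\omega_{t_i}$. Applying Lemma \ref{lemma SW} to each fiber containing $x_{j,i}$, we may further assume $x_{j,i}\in \hat N$, at the cost of a perturbation $O((T-t_i)^{1/3})\to 0$ in $\omega_{t_i}$-distance.

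Now $x_{1,i},x_{2,i}$ lie in the zero section $\hat N$, and their images $\hat\pi(x_{j,i})=y_{j,i}\in N$ both converge to $y$ in $\omega_Y$. Since $\hat\pi|_{\hat N}:\hat N\to N$ is a biholomorphism and $\omega_0$ is smooth on $X$, this forces $d_{\omega_0}(x_{1,i},x_{2,i})\to 0$. Lemma \ref{lemma 1.6} then gives
\[
d_{\omega_{t_i}}(x_{1,i},x_{2,i})\le C\,d_{\omega_0}(x_{1,i},x_{2,i})^\delta\to 0,
\]
so that $d_Z(z_1,z_2)=\lim_i d_{\omega_{t_i}}(x_{1,i},x_{2,i})=0$, i.e.\ $z_1=z_2$.

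The main obstacle is precisely this injectivity over $N$: points in $Z$ lying above a single point of $N$ could in principle be separated in the limit, and one needs \emph{both} the fiberwise collapsing rate of Lemma \ref{lemma SW} (to reduce to $\hat N$) and the H\"older distance estimate Lemma \ref{lemma 1.6} (to convert smallness in $\omega_0$ to smallness in $\omega_{t_i}$) in order to rule this out. Once injectivity is established, the compact-to-Hausdorff argument finishes the proof.
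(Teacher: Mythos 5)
Your proof is correct and follows essentially the same strategy as the paper's: establish continuity via the distance comparison from Lemma \ref{lemma 1.10}, prove injectivity by reducing to points on the zero section $\hat N$ via Lemma \ref{lemma SW} and then invoking the H\"older estimate of Lemma \ref{lemma 1.6}, and finish by the compact-to-Hausdorff argument. The one place you genuinely diverge from the paper is surjectivity: the paper constructs a preimage of each $p\in N$ directly, by taking the constant sequence $\hat p_i=\hat p\in\hat N$ and extracting a GH limit $p_Z$ with $\pi_Z(p_Z)=p$; you instead observe that $\pi_Z(Z)$ is compact, hence closed in $(Y,d_T)$, and contains the dense set $Y^\circ$, so $\pi_Z(Z)=Y$. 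Your topological shortcut is cleaner and equally valid (the density of $Y^\circ$ in $(Y,d_T)$ indeed follows from $d_T\le C d_{\omega_Y}^{\delta_0}$), and it avoids the repetition of the GH-limit argument. Your intermediate step in the injectivity argument --- first showing $d_{\omega_{t_i}}(x_{j,i},E)\to 0$, then moving to $E$, then to $\hat N$ via Lemma \ref{lemma SW} --- also usefully fills in detail that the paper only references (to the proof of an earlier lemma) rather than spells out. The replacement of the paper's local-coordinate computation of $\omega_X$ on $\hat N$ by the observation that $\hat\pi|_{\hat N}$ is a biholomorphism is a legitimate and arguably more transparent way to deduce $d_{\omega_0}(x_{1,i},x_{2,i})\to 0$.
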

Note that the target space is equipped with the metric $d_T$, not the metric $d_{\omega_Y}$. 
\begin{proof}
From Lemma \ref{lemma 1.10}, we get for any $z_1, z_2\in Z$
\begin{equation*}
d_T(\pi_Z(z_1), \pi_Z(z_2) )\le C d_{\omega_Y} ( \pi_Z(z_1),\pi_Z(z_2)  )^{\delta_0}\le C d_Z(z_1,z_2)^{\delta_0},
\end{equation*}
so the map $\pi_Z: (Z,d_Z) \to (Y,d_T)$ is continuous.

\medskip

\noindent$\bullet$ {\bf $\pi_Z$ is injective.} Suppose $z_1,z_2\in Z$ satisfies $\pi_Z(z_1) = \pi_Z(z_2) = y\in Y$. If $y\in Y^\circ$, then $z_1,z_2\in Z^\circ$, $z_1 = z_2$ by  the injectivity of $\pi_Z|_{Z^\circ}$. So we only need to consider the case $y\in Y\backslash Y^\circ = N$ and thus $z_1, z_2 \in Z\backslash Z^\circ$. Pick sequences of points $x_{1,i}, x_{2,i}\in (X,\omega_{t_i})$ converging in GH sense to $z_1, z_2$, respectively. By similar arguments as in the proof of Lemma \ref{lemma 1.3}, without loss of generality we can assume $x_{1,i}, x_{2,i}\in \hat N\subset E$. Denote $y_{1,i} = \pi(x_{1,i})$ and $y_{2,i} = \pi(x_{2,i})$. We then have
\begin{equation*}
d_{\omega_Y} ( y_{1,i}, y  ) = d_{\omega_Y}( \pi_i(x_{1,i}), \pi_Z(z_1)  )\xa{i\to\infty} 0,
\end{equation*}
and similarly $d_{\omega_Y} (y_{2,i}, y  )\to 0$ as well, and this implies that $d_{\omega_Y}(y_{1,i}, y_{2,i})\to 0$. Since $x_{1,i}$ and $x_{2,i}$ are both in the zero section $\hat N$, from the local expressions \eqref{eqn:pullback} and \eqref{eqn:pullback 1} of the metric $\omega_X = \pi^*\omega_Y + \varepsilon_0\ddbar \log \sigma_X$, we  see that $d_{\omega_X} (x_{1,i}, x_{2,i})\to 0$ as $i\to\infty$. Then by Lemma \ref{lemma 1.6} again, we get $d_{\omega_{t_i}}(x_{1,i}, x_{2,i})\le C d_{\omega_X} (x_{1,i}, x_{2,i})^\delta \to 0$. Letting $i\to\infty$ we get $d_Z(z_1, z_2) = 0$, thus $z_1 = z_2$. This proves the injectivity of $\pi_Z$. 

\medskip

\noindent $\bullet$ {\bf $\pi_Z$ is surjective.} This follows from the definition. In fact, we only need to show any $p \in Y\backslash Y^\circ = N$ lies in the image of $\pi_Z$. We fix the point $\hat p\in \hat N$ with $\hat \pi(\hat p) = p$. $\hat p_i = \hat p\in (X,\omega_{t_i})$ converge  up to subsequence in GH sense to a point $p_Z\in Z$. Then $d_{\omega_Y} (p, \pi_Z(p_Z)) = d_{\omega_Y} ( \pi_i (\hat p_i), \pi_Z(p_Z)  ) \to 0$ by definition of $\pi_i\xa{GH} \pi_Z$. It then follows that $\pi_Z(p_Z) = p$.

\medskip

Thus, $\pi_Z: (Z,d_Z) \to (Y, d_T)$ is bijective and continuous. It is also a homeomorphism since $(Z,d_Z)$ is compact.

\end{proof}

\bigskip

\noindent{\bf Acknowledgements} The author would like to thank Professors Duong H. Phong and Jian Song for their constant support, encouragement and inspiring discussions. He also wants to thank Xiangwen Zhang and Teng Fei for many helpful suggestions.  This work is  supported in part by National Science Foundation grant DMS-1710500.

\end{document}